\title{\bf On the number of harmonic frames}
\author{Simon Marshall$^*$ and Shayne Waldron$^\dagger$ 
 \\   \\
\vbox{
\hbox{\footnotesize \noindent $^*$ Department of Mathematics, University of Wisconsin--Madison, 480 Lincoln Drive, Madison, WI 53706, USA}
{\vskip 0.10 truecm}
\hbox{\footnotesize ${}^\dagger$ Department of Mathematics, University of Auckland,
Private Bag 92019, Auckland, New Zealand}
{\vskip 0.10 truecm}
} 
}
\date{\today}
\newtheorem{theorem}{Theorem}[section]
\newtheorem{lemma}[theorem]{Lemma}
\newtheorem{example}[theorem]{Example}
\newtheorem{proposition}[theorem]{Proposition}
\newenvironment{proof}{{\noindent \bf
Proof:}}{\hfill$\Box$\bigskip}
\newcommand{\CC}{\mathbb{C}}
\newcommand{\RR}{\mathbb{R}}
\newcommand{\ZZ}{\mathbb{Z}}
\newcommand{\C}{\mathbb{C}}
\newcommand{\Q}{\mathbb{Q}}
\def\Cd{\CC^d}
\def\Rd{\RR^d}
\def\bmat#1{\begin{bmatrix}#1\end{bmatrix}}
\def\inpro#1{\langle#1\rangle}
\def\norm#1{\Vert#1\Vert}
\let\ga\alpha
\let\gb\beta
\let\go\omega
\let\gep\epsilon
\let\gs\sigma
\def\ord{\mathop{\rm ord}\nolimits}
\def\Iff{\hskip1em\Longleftrightarrow\hskip1em}
\def\TT{\mathbb{T}}
\def\Aut{\mathop{\rm Aut}\nolimits}
\def\Fix{\mathop{\rm Fix}\nolimits}
\def\cX{\mathcal{X}}
\def\cY{\mathcal{Y}}
\newif\ifdraft\def\draft{\drafttrue\hoffset=.8truecm\showlabeltrue
\def\comment##1{{\bf comment: ##1}}
\headline={\sevenrm \hfill \ifx\filenamed\undefined\jobname\else\filenamed\fi%
(.tex) (as of \ifx\updated\undefined???\else\updated\fi)
 \TeX'ed at {\hour\time\divide\hour by 60{}%
\minutes\hour\multiply\minutes by 60{}%
\advance\time by -\minutes
\the\hour:\ifnum\time<10{}0\fi\the\time\  on \today\hfill}}
}
\begin{document}

\maketitle
\bigskip

\begin{abstract}

There is a finite number $h_{n,d}$
of tight frames of $n$ distinct vectors
for $\Cd$ which are the orbit of a 
vector under a unitary action of
the cyclic group $\ZZ_n$.
These {\it cyclic harmonic frames} (or {\it geometrically uniform tight frames})
are used in signal analysis 
and quantum information theory,
and provide many tight frames of particular interest.
Here we investigate the 
conjecture that
 $h_{n,d}$
grows like $n^{d-1}$.
By using a result of Laurent which describes
the set of solutions of algebraic equations in roots of unity,
we prove the asymptotic estimate
$$ h_{n,d} \approx {n^d \over \varphi(n)}\ge n^{d-1}, \qquad n\to\infty. $$
By using a group theoretic approach,
we also give some exact formulas for $h_{n,d}$, and estimate
the number of cyclic harmonic frames up to projective unitary equivalence.

\end{abstract}

\bigskip
\vfill

\noindent {\bf Key Words:}
Finite tight frames,
harmonic frames,
finite abelian groups,
character theory,
Pontryagin duality,
Mordell-Lang conjecture,
projective unitary equivalence

\bigskip
\noindent {\bf AMS (MOS) Subject Classifications:}
primary
42C15, \ifdraft General harmonic expansions, frames  \else\fi
94A12, \ifdraft	Signal theory (characterization, reconstruction, filtering, etc.) \else\fi

secondary
11Z05, \ifdraft	Miscellaneous applications of number theory \else\fi
20C15, \ifdraft	Ordinary representations and characters \else\fi
94A15, \ifdraft	Information theory, general [See also 62B10, 81P45] \else\fi
20F28, \ifdraft	Automorphism groups of groups \else\fi


\vskip .5 truecm
\hrule
\newpage

\section{Introduction}

Tight frames of $n$ 
vectors for $\Cd$
have numerous applications
(see the surveys \cite{KC07}, \cite{CK13}).
These include
signal transmission with erasures \cite{GKK01}, \cite{HP04}, \cite{BP05}
and quantum information theory \cite{RBSC04}, \cite{SG09}.

Many tight frames of practical and theoretical interest are
$G$--frames (the orbit of a unitary action of a group $G$) \cite{W13}.
Most notable are the {\it harmonic frames} ($G$ is abelian)
and {\it SICs}, i.e., $d^2$ equiangular lines in $\Cd$
(for a projective action of the abelian group $\ZZ_d^2$).
The main result of this paper  is a precise statement about how
numerous the harmonic frames of $n$ vectors for $\Cd$ are 
(Theorem \ref{framecount}). 
By way of comparison, SICs are known
to exist only for certain values of $d$, and there is
strong evidence for {\it Zauner's conjecture}
that they exist for all values of $d$ (see \cite{SG09}, \cite{Zau10}).

We now provide some background on harmonic frames,
and then detail our approach 
(precise definitions are given in \S\ref{harmframesbasics}).
What we will call a {\it cyclic harmonic frame} for $\Cd$
was first introduced as a 
$d\times n$ submatrix $[v_1,\ldots,v_n]$ of the Fourier matrix
(character table for $\ZZ_n$)
\begin{equation}
\label{cycordern}
\bmat{1&1&1&\cdots&1\\
1&\go&\go^2&\cdots&\go^{n-1}\\
1&\go^2&\go^4&\cdots&\go^{2(n-1)} \\
\vdots&\vdots&\vdots&&\vdots\\
1&\go^{n-1}&\go^{2(n-1)}&\cdots&\go^{(n-1)(n-1)} },
\qquad \go:=e^{2\pi i\over n},
\end{equation}
obtained by selecting $d$ of the rows (characters of $\ZZ_n$).
See \cite{GVT98},
\cite{HMRSU00},
\cite{CK03} (who use the term {\it harmonic frame} for when the
first $d$ rows are taken), and
\cite{CRT06} (who use the term a {\it Fourier ensemble}).
These tight frames can be viewed as $\ZZ_n$--frames
(called {\it geometrically uniform frames} in \cite{BE03}).
This construction generalises, with $\ZZ_n$ replaced
by an abelian group $G$ of order $n$ \cite{VW05},
to give what we call a {\it harmonic frame}
 (it is {\it cyclic} if $G$ can be taken to be $\ZZ_n$).
It follows from the character table construction
(and the fact there are a finite number of abelian groups of order $n$)
that there is a finite number of harmonic frames of $n$ vectors for $\Cd$.

A computer study \cite{HW06} of the harmonic frames of $n$ vectors for $\Cd$
suggested the following behaviour:
\begin{itemize}
\item The number of harmonic frames (up to unitary equivalence) grows
like $n^{d-1}$, and it is influenced by the prime factors of $n$.
\item The majority of harmonic frames are cyclic.
\end{itemize}
In this paper, we show that for fixed $d$ the number $h_{n,d}$
of cyclic harmonic frames grows like
$$ h_{n,d} \approx {n^d\over\varphi(n)} \ge n^{d-1}, \qquad n\to\infty. $$
The key points of our argument are
\begin{itemize}
\item Cyclic harmonic frames correspond to
$d$--element subsets $J\subset\ZZ_n$. 
\item When cyclic harmonic frames given by $J,K\subset\ZZ_n$ are unitarily
equivalent, usually $K=\gs J$ for some automorphism. When this is not the
case, we say they 
are {\it exceptional}.
\item The automorphisms of $\ZZ_n$ are easy to describe (as the units $\ZZ_n^*$).
\item A pair of unitarily equivalent cyclic harmonic frames determine
a torsion point on the $(2d)$--torus $\TT^{2d}$.
\item By using results about the torsion point solutions of
algebraic equations, we show that the number of exceptional harmonic
frames grows slower than the number which aren't.
\item The nonexceptional cyclic harmonic frames are counted
by Burnside enumeration.
\end{itemize}

We carry out this argument in \S4--\S5.  We give examples and some numerical data in \S6.  In \S7 we show that there are no exceptional equivalences when $n$ is prime, and together with Burnside enumeration this allows us to give an exact formula for
$h_{n,d}$ in this case, which we break down into lifted and unlifted,
and into real and complex harmonic frames.

In the final section, 
we use our techniques to investigate 
the number $p_{n,d}$ of harmonic frames of $n$ vectors for $\Cd$ 
up to {\it projective unitary equivalence}.
For $d\ge 4$, this gives the lower estimate
$$ p_{n,d} \approx {n^{d-1}\over \varphi(n)} \ge n^{d-2}, \qquad n\to\infty.
$$

\section{Harmonic frames}
\label{harmframesbasics}

A sequence of $n$ vectors $(v_j)$ in $\Cd$ is a {\bf tight frame}
for $\Cd$ if for some $A>0$
$$ A\norm{f}^2= \sum_{j=1}^n |\inpro{f,v_j}|^2, \qquad\forall f\in\Cd.$$
By the polarisation identity, this is equivalent to the
``redundant orthogonal expansion''
\begin{equation}
\label{redundonexp}
f={1\over A} \sum_{j=1}^n \inpro{f,v_j}v_j, \qquad\forall f\in\Cd.
\end{equation}
We say that tight frames $(v_j)$ and $(w_k)$ are 
{\bf unitarily equivalent} (up to a reindexing) if there
is a unitary map $U$ and a bijection $\gs:j\to k$ (a reindexing)
between their index sets for which
\begin{equation}
\label{unitaryequivdef}
v_j = U w_{\gs j}, \qquad\forall j.
\end{equation}
If $(v_j)$ is unitarily equivalent to a frame $(w_j)\subset\Rd$, then
we say it is a {\bf real frame}.

A tight frame $(gv)_{g\in G}$ which is the orbit of a vector $v$ under 
the unitary action of a finite group $G$ is called a {\bf $G$--frame}
(or {\bf group frame}) \cite{W13}. For $G$ abelian, there are finitely
many $G$--frames for $\Cd$ up to unitary equivalence, which we call
the {\bf harmonic frames}. 
For $G$ nonabelian,
there are uncountably many $G$--frames for $\Cd$, $d\ge2$.
We now give the basic theory of harmonic frames required
(see \cite{W16} for details).

Let $G$ be a finite abelian group.
The ({\bf irreducible})  {\bf characters} 
\index{characters of a finite abelian group}
\index{irreducible characters of a finite abelian group}
 of $G$ are the group homomorphisms
$\xi:G\to\CC\setminus\{0\}$, where $\CC\setminus\{0\}$ is a group under multiplication.
Here we think of characters 
as vectors $\xi\in\CC^G$ (with the Euclidean
inner product), which satisfy
\begin{equation}
\label{charhom}
\xi(gh)=\xi(g)\xi(h), \qquad\forall g,h\in G. 
\end{equation}
The set of irreducible characters of the abelian group $G$
is denoted by $\hat G$.

The characters $\hat G$ form a
group under the multiplication $(\xi\eta)(g):=\xi(g)\eta(g)$,
which is called the {\bf character group}. \index{character group}
The character group $\hat G$ is isomorphic to $G$.
For $\chi\in\hat G$, (\ref{charhom}) implies that $\chi(g)$ is a $|G|$--th root of unity,
and so the inverse of $\chi$ satisfies
\begin{equation}
\label{chargrpinv}
\chi^{-1}(g)={1\over\chi(g)}=\overline{\chi(g)}. 
\end{equation}
The square matrix with the irreducible characters of $G$ as rows
is referred to as the {\bf character table} of $G$.
For example, if $G=\langle a\rangle$
is the cyclic group of order $n$, with its elements ordered
$1,a,\ldots,a^{n-1}$, then its character table is given
by (\ref{cycordern}). 

The harmonic frames for $\Cd$ given by $G$ 
can all be described (up to unitary equivalence) in two equivalent 
ways:
\begin{enumerate}
\item
By a choice of $d$ characters 
$\{\xi_1,\ldots,\xi_d\}\subset\hat G$ ($d$ rows of the character table), i.e.,
\begin{equation}
\label{chardefharm}
\Psi_{\{\xi_1,\ldots,\xi_d\}} = (v_g)_{g\in G}, \qquad
v_g:=(\xi_j(g))_{j=1}^d\in\Cd.
\end{equation}
\item 
By a choice of $d$ group elements $J\subset G$ ($d$ columns of the character table), i.e.,
\begin{equation}
\label{groupdefharm}
\Phi_J = (\xi|_J)_{\xi\in\hat G}, \qquad \xi|_J\in\CC^J\cong\Cd.
\end{equation}
\end{enumerate}
Many properties of harmonic frames are easy to describe in
the second presentation. In particular:
\begin{itemize}
\item $\Phi_J$ has distinct vectors if and only if $J$ generates $G$.
\item $\Phi_J$ is a real frame
if and only  $J$ is closed under taking inverses.
\item If $\gs$ 
is an automorphism of $G$ and $K=\gs J$, then
$\Phi_J$ and $\Phi_K$ are unitarily equivalent. \\
In this case we say that $\Phi_J$ and $\Phi_K$ are 
{\bf unitarily equivalent via an automorphism}.
\end{itemize}

Note that if $J$ does not generate $G$, then $\Phi_J$ has $| \langle J \rangle |$ distinct vectors (where $\langle J \rangle$ is the group generated by $J$), each occuring $| G : \langle J \rangle |$ times.  Moreover, the frame obtained by taking one of each distinct vector is a harmonic frame associated to $\langle J \rangle$.

We now focus on the harmonic frames for $G=\ZZ_n$, 
which are said to be {\bf cyclic} (harmonic) frames. 
In this case, the automorphism group of $G$ (and hence $\hat G$) 
has a particularly simple form: each automorphism corresponds
to a unit $a\in\ZZ_n^*$ via
$$ \gs g= a g, \quad \forall g\in G, \qquad\gs\in\Aut(G), $$
$$ \tau \chi = \chi^a, \quad\forall \chi\in\hat G, \qquad
\tau\in\Aut(\hat G). $$
We say that 
$d$--element subsets $\{\xi_1,\ldots,\xi_d\}$
and $\{\eta_1,\ldots,\eta_d\}$ of $\hat\ZZ_n$, or
$d$--elements subsets $K$ and $J$ of $\ZZ_n$, are
{\bf multiplicatively equivalent} if there is an 
automorphism mapping one to the other, i.e.,
$$ \{\xi_1,\ldots,\xi_d\}=\{\eta_1,\ldots,\eta_d\}^a =
\{\eta_1^a,\ldots,\eta_d^a\}, \qquad
K=aJ. $$ 
This holds if and only if the cyclic harmonic frames they determine are unitarily equivalent via an automorphism.  Because of this, we shall sometimes apply the term `multiplicatively equivalent' to a pair of frames to mean they are unitarily equivalent via an automorphism.  An {\bf exceptional equivalence} is a unitary equivalence between 
cyclic harmonic frames given by 
sets of characters
(or group elements) which are not multiplicatively equivalent.  
An {\bf exceptional frame} is one having an exceptional equivalence 
with another frame.


\section{The number of cyclic harmonic frames}

Calculations of \cite{CW11} indicate
that {\it most cyclic harmonic frames are not exceptional},
i.e., unitarily equivalent cyclic harmonic frames 
usually come from multiplicatively equivalent subsets 
(this is proved in Proposition \ref{excount}).
This is the basic principle underlying our results.

Let $h_{n,d}$ be the number of unitarily inequivalent cyclic harmonic frames of
$n$ distinct vectors for $\Cd$. 
The main result is the following, which gives the growth of
$h_{n,d}$ (for $d$ fixed).

\begin{theorem}
\label{framecount}
If $d = 1$, then $h_{n,d} = 1$ for all $n$.  If  $d \ge 2$, then
for all $\epsilon >0$, we have
\begin{equation}
\label{hndexactasymp}
h_{n,d} = \frac{n^d}{d! \varphi(n)} 
\prod_{p | n} (1 - p^{-d}) 
\bigl(1 + O_\epsilon(n^{-1 + \epsilon})\bigr), \qquad n\to\infty, 
\end{equation}
where $\varphi$ is Euler's totient function, and the product
is over the prime factors $p$ of $n$.
\end{theorem}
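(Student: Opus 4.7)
The plan is to reduce the unitary classification to an orbit count of $\ZZ_n^*=\Aut(\ZZ_n)$ on the $d$-element generating subsets of $\ZZ_n$, corrected by the (proven-to-be-small) number of exceptional equivalences. Writing $N_{n,d}$ for the number of such generating $d$-subsets and $M_{n,d}$ for the number of $\ZZ_n^*$-orbits among them (i.e., the number of multiplicative equivalence classes), we have $h_{n,d}\le M_{n,d}$, with the discrepancy bounded by the number of exceptional frames counted in Proposition \ref{excount}. Thus the task splits into (i) obtaining an asymptotic for $M_{n,d}$ and (ii) verifying that the exceptional correction is absorbed into the error term $O_\epsilon(n^{-1+\epsilon})$.

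For (i), I would first estimate $N_{n,d}$. A subset $J\subset\ZZ_n$ fails to generate exactly when it lies in a maximal subgroup $\langle n/p\rangle$ for some prime $p\mid n$; these intersect in subgroups of order $n/\prod p$. Inclusion–exclusion over the squarefree divisors of $n$ gives
$$
N_{n,d}=\sum_{m\mid\mathrm{rad}(n)}\mu(m)\binom{n/m}{d}
=\frac{n^d}{d!}\prod_{p\mid n}\!\bigl(1-p^{-d}\bigr)+O(n^{d-1}),
$$
where the main term appears after expanding $\binom{n/m}{d}=(n/m)^d/d!+O(n^{d-1})$ and recognizing the Euler product. The number of prime divisors of $n$ is $O_\epsilon(n^\epsilon)$, so secondary terms are absorbed in the stated error.

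For the orbit count, Burnside gives
$$
M_{n,d}=\frac{1}{\varphi(n)}\sum_{a\in\ZZ_n^*}\bigl|\{J:|J|=d,\ \langle J\rangle=\ZZ_n,\ aJ=J\}\bigr|.
$$
The $a=1$ term contributes $N_{n,d}/\varphi(n)$, which already produces the claimed main term. For $a\neq 1$ of order $k$, an $a$-invariant subset is a union of $\langle a\rangle$-orbits on $\ZZ_n$, each of length dividing $k$; since $k>1$ the orbits have size $\ge 2$ on all but the $\gcd(a-1,n)\le n/p_{\min}$ fixed points, so the number of $a$-invariant $d$-subsets is at most $O(n^{d-1})$ uniformly in $a$. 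Summing over the $\varphi(n)-1$ non-identity units and dividing by $\varphi(n)$ gives a contribution of size $O(n^{d-1})$, which after comparing with the main term $\asymp n^d/\varphi(n)\gg n^{d-1}$ is negligible at the rate $n^{-1+\epsilon}$ (using $\varphi(n)\gg n^{1-\epsilon}$).

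For (ii), I would invoke Proposition \ref{excount}: the total number of exceptional frames is (we will have shown) at most $O_\epsilon(n^{d-1+\epsilon})$, via the torsion-point bound coming from Laurent's theorem, since each exceptional equivalence produces a torsion point on $\TT^{2d}$ satisfying a nontrivial system of monomial equations which have sparsely many solutions. This correction therefore shifts $M_{n,d}$ to $h_{n,d}$ within the $O_\epsilon(n^{-1+\epsilon})$ relative error. Combining the pieces yields the formula. The main obstacle is the sharp control of the exceptional correction; once Proposition \ref{excount} is in hand, the rest is clean sieving plus Burnside, but the Burnside step also needs the uniform bound on $a$-fixed subsets, which is the only place where one has to be careful not to leak factors of $d(n)=n^{o(1)}$.
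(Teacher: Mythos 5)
Your overall strategy is the same as the paper's: reduce to a Burnside count of $\ZZ_n^*$-orbits on generating $d$-subsets, estimate the number of generating subsets by inclusion--exclusion, and control the exceptional equivalences via Laurent's theorem (Proposition \ref{excount}). The inclusion--exclusion step and the treatment of the exceptional correction are fine. But there is a genuine gap in your Burnside step.

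You bound $|\Fix(a)|$ by $O(n^{d-1})$ uniformly for all $a\neq 1$, sum over the $\varphi(n)-1$ nontrivial units, divide by $\varphi(n)$, and declare the resulting $O(n^{d-1})$ contribution ``negligible at the rate $n^{-1+\epsilon}$'' against the main term $\asymp n^d/\varphi(n)$. This comparison fails precisely when $\varphi(n)$ is close to $n$ (e.g.\ $n$ prime): then the main term is itself only $\asymp n^{d-1}$, so an additive error of $O(n^{d-1})$ gives relative error $O(\varphi(n)/n)=O(1)$, not $O_\epsilon(n^{-1+\epsilon})$. The inequality $\varphi(n)\gg n^{1-\epsilon}$ that you invoke points in the wrong direction: the danger is large $\varphi(n)$, not small. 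Moreover the $n^{d-1}$ bound cannot be improved uniformly --- for $a$ of order $2$ with a large fixed subgroup $H$ (e.g.\ $n=2m$, $a=m+1$), one really has $|\Fix(a)|\asymp n^{d-1}$. The paper's fix is a dichotomy (Lemma \ref{fixbound}): if $a^2\neq 1$ then $|\Fix(a)|\le n^{d-2}$, because a generating fixed set must contain a full orbit $\{b,ab,a^2b\}$ of three distinct elements, so these $a$ contribute only $O(n^{d-2})$ after dividing by $\varphi(n)$; and the remaining units with $a^2=1$, $a\neq 1$ number at most $2^{\omega(n)+1}\ll_\epsilon n^\epsilon$ by the structure of $\ZZ_{p^m}^*$ and the Chinese Remainder Theorem, so their total contribution is $O_\epsilon(n^{d-1+\epsilon}/\varphi(n))$, which is an acceptable relative error. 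Your closing remark about ``not leaking factors of $d(n)$'' shows you sensed the delicacy, but the actual issue is the order-$2$ units, and without separating them out the argument does not close.
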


Throughout, we use asymptotic notation, e.g., 
$a_n = O_\epsilon(b_n)$ means $|a_n|\le C b_n$ as $n\to\infty$,
where $b_n\ge0$ and $C$ is a constant depending only on $\epsilon$.
For $a_n\ge0$, $b_n\ge0$, we write
\begin{align*}
a_n\ll b_n & \Iff a_n=O(b_n), \cr
a_n\approx b_n  & \Iff a_n=O(b_n),\ b_n=O(a_n).
\end{align*}
The {\it Euler product formula} for the Riemann zeta function gives
\begin{equation}
\label{Eulerprodform}
0<{1\over\zeta(d)} 
= \prod_{p\ {\rm prime}} (1 - p^{-d}) 
< \prod_{p | n} (1 - p^{-d}) \le1, \qquad d=2,3,\ldots.
\end{equation}
Thus (\ref{hndexactasymp}) gives the asymptotic 
estimate
$$ h_{n,d} \approx {n^d\over\varphi(n)} \ge n^{d-1}, \qquad n\to\infty. $$


The proof of Theorem \ref{framecount}
(see the comments after Proposition  \ref{multiequivcountprop})
consists of two parts:
\begin{enumerate} 
\item We think of cyclic harmonic frames as being
given by subsets (or sequences) of $d$ characters of $\ZZ_n$, 
and hence by $n$--th roots of unity. 
This allows us to show that a pair of unitarily equivalent 
frames gives a torsion point on the torus $\TT^{2d}$ that 
satisfies certain algebraic equations. By using results about
the solutions of algebraic equation in roots of unity, 
we show that
the number exceptional cyclic harmonic frames grows slower than those which 
aren't (Proposition \ref{excount}).

\item In view of Proposition \ref{excount}, it suffices to count the
cyclic harmonic frames up to unitary equivalence via an
automorphism.
This we do by (Burnside) counting the $d$--element subsets of $\ZZ_n$
up to multiplicative equivalence
(Proposition \ref{multiequivcountprop}).
\end{enumerate}

We now give the arguments for each part 
(\S\ref{numexceptframesect} and \S \ref{multequivcntsect}).

\section{Torsion points and exceptional cyclic 
frames}
\label{numexceptframesect}


Since any character $\xi$ of $\ZZ_n$ satisfies
$$ \xi(k)=\xi(1)^k, \qquad\forall k,$$
choosing $\xi$ is equivalent to choosing the $n$--th root of
unity $\xi(1)$. Thus a choice of characters
(giving a cyclic frame) corresponds to 
a choice of $n$--th roots via
$$ \{\xi_1,\ldots,\xi_d\} \Iff \{\xi_1(1),\ldots,\xi_d(1)\}. $$
In this section, we shall often think of a choice of $d$ characters 
(or $n$--th roots) as being an {\it ordered subset} 
$(\xi_1,\ldots,\xi_d)$. There are $d!$ such orderings of a given subset
$\{\xi_1,\ldots,\xi_d\}$.

Two sets of $n$--th roots determining unitarily equivalent harmonic
frames satisfy the following. 



\begin{lemma}
\label{frametopoint}
Let $G$ be a finite abelian group.
If $(v_g)_{g\in G}$ is a harmonic frame, then
$$ \inpro{v_{a+c},v_{b+c}}
= \inpro{v_{a},v_{b}}, \qquad\forall a,b,c\in G. $$
In particular, 
if $\{\xi_1,\ldots,\xi_d\},\{\eta_1,\ldots,\eta_d\}\subset\hat\ZZ_n$ 
give unitarily equivalent cyclic harmonic frames, then
for some $a\in\ZZ_n$, we have
\begin{equation}
\label{sumid}
\sum_{j=1}^d \xi_j(1) = \sum_{j=1}^d \eta_j(a).
\end{equation}
\end{lemma}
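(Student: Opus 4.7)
The plan is to reduce everything to the observation that the Gram matrix of a harmonic frame is translation invariant, which drops out of the definitions. Working in the character presentation $v_g=(\xi_j(g))_{j=1}^d$ of (\ref{chardefharm}) and using $\overline{\xi_j(b)}=\xi_j(-b)$ from (\ref{chargrpinv}), one computes
$$ \inpro{v_a,v_b}=\sum_{j=1}^d \xi_j(a)\overline{\xi_j(b)}=\sum_{j=1}^d \xi_j(a-b), $$
so $\inpro{v_a,v_b}$ depends only on the difference $a-b$. Replacing $a,b$ by $a+c,b+c$ leaves that difference unchanged, establishing the first assertion.

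For the particular statement (\ref{sumid}), I encode the Gram data for the two frames as functions $f,\tilde f\colon\ZZ_n\to\CC$ by
$$ f(g):=\inpro{v_g,v_0}=\sum_{j=1}^d\xi_j(g), \qquad \tilde f(g):=\sum_{j=1}^d\eta_j(g), $$
so that translation invariance reads $\inpro{v_x,v_y}=f(x-y)$ and $\inpro{w_x,w_y}=\tilde f(x-y)$. The unitary equivalence (\ref{unitaryequivdef}) supplies a unitary $U$ and a bijection $\gs\colon\ZZ_n\to\ZZ_n$ with $v_g=Uw_{\gs g}$, and hence gives the matching of Gram matrices
$$ f(x-y)=\inpro{v_x,v_y}=\inpro{Uw_{\gs x},Uw_{\gs y}}=\tilde f(\gs x-\gs y). $$
Taking $x=1$, $y=0$ and setting $a:=\gs(1)-\gs(0)\in\ZZ_n$ yields
$$ \sum_{j=1}^d\xi_j(1)=f(1)=\tilde f(a)=\sum_{j=1}^d\eta_j(a), $$
which is (\ref{sumid}).

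I expect no serious obstacle. The two ingredients driving the proof are entirely elementary: the coordinate sum $\sum_j\xi_j(g)$ is literally a column of the Gram matrix, and complex conjugation of a character amounts to inversion of its argument. The only small point of care is that the reindexing $\gs$ accompanying a unitary equivalence need not fix $0$, so the element $a$ in (\ref{sumid}) is naturally the \emph{difference} $\gs(1)-\gs(0)$ rather than $\gs(1)$ itself; translation invariance of both Gram matrices is exactly what allows this shift to be absorbed.
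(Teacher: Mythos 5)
Your proof is correct and follows essentially the same route as the paper: translation invariance of the Gram matrix via the character homomorphism property, then evaluating the matched Gram entries at $x=1$, $y=0$ with $a=\gs(1)-\gs(0)$. The only cosmetic difference is that you package the computation as $\inpro{v_a,v_b}=\sum_j\xi_j(a-b)$ using $\overline{\xi_j(b)}=\xi_j(-b)$, whereas the paper cancels the $\xi_j(c)\overline{\xi_j(c)}$ factors directly; these are the same argument.
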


\begin{proof}
Let $(v_g)_{g\in G}$ and $(w_g)_{g\in G}$ be the harmonic
frames given by $\{\xi_j\}$ and $\{\eta_j\}$ via 
(\ref{chardefharm}).
Then we calculate
$$ \inpro{v_{a+c},v_{b+c}} 
= \sum_j \xi_j(a+c)\overline{\xi_j(b+c)}
= \sum_j \xi_j(a)\xi_j(c) \overline{\xi_j(b)} \overline{\xi_j(c)}
= \sum_j \xi_j(a) \overline{\xi_j(b)} 
=\inpro{v_{a},v_{b}}.  $$
If  $(v_g)_{g\in G}$ and $(w_g)_{g\in G}$  are unitarily
equivalent via (\ref{unitaryequivdef}), then
$$ \inpro{v_k,v_\ell}
=\inpro{Uw_{\gs k},Uw_{\gs \ell}}
=\inpro{w_{\gs k},w_{\gs \ell}}
=\inpro{w_{\gs k-\gs \ell},w_{0}}. $$
For $G=\ZZ_n$, taking $k=1$, $\ell=0$ 
above, gives
$\inpro{v_1,v_0}=\inpro{w_a,w_0}$, 
$a:=\gs 1-\gs 0$,
which is (\ref{sumid}).
\end{proof}

Let $V\subset\CC^{2d}$ be the set of solutions to
\begin{equation}
\label{plane}
\sum_{j=1}^d z_j - \sum_{j=1+d}^{2d} z_j=0.
\end{equation}
By Lemma \ref{frametopoint},  
every unitary equivalence between cyclic
harmonic frames of $n$ vectors for $\Cd$ gives a solution 
\begin{equation}
\label{zformula}
z=(\xi_1(1),\ldots,\xi_d(1),\eta_1(a),\ldots,\eta_d(a))\in V
\end{equation}
in $n$--th roots of unity.  
Moreover, one may easily produce such solutions by letting 
$z_{d+1}, \ldots, z_{2d}$ be a permutation of $z_1, \ldots, z_d$.  
We call a solution to (\ref{plane}) in roots of unity 
{\bf exceptional} if $z_{d+1}, \ldots, z_{2d}$ is not a 
permutation of $z_1, \ldots, z_d$.

We shall prove 
(Lemma \ref{project}) 
that any exceptional equivalence between cyclic harmonic 
frames gives rise to an exceptional solution to (\ref{plane}).
This allows us to prove that the number of exceptional
cyclic harmonic frames is small
(Proposition \ref{excount}),
i.e., 
$$\hbox{{\it Of the $\approx n^d$ choices 
of $d$ characters of $\ZZ_n$,
$\ll n^{d-1}$ 
give exceptional cyclic harmonic frames. }} $$
This reduces the proof of Theorem \ref{framecount} to that of counting
the number of nonexceptional cyclic harmonic frames, which is done by 
counting $\ZZ_n^*$--orbits 
(see \S\ref{multequivcntsect}).

We will prove Proposition \ref{excount} by reducing it to a count 
of solutions to a linear equation in roots of unity.  
This is a well studied problem in the theory of Diophantine equations, 
and the set of solutions has a simple structure described by a theorem 
of Laurent \cite{L84}, 
which is a special case of the Mordell--Lang conjecture.
We now give the details.




Let $\TT^k$ be the {\bf $k$--torus}
$$ \TT^k := \{z\in\CC^k:|z_1|=\cdots=|z_k|=1\}, $$
which is a {\it compact abelian Lie group} under the group operation
$$ z\cdot w:=(z_1w_1,\ldots,z_kw_k).$$
A point on the torus of finite order is called a {\bf torsion point}.  
We denote the set of torsion points by $\TT^k_\textup{tors}$.  
We let $\TT^k[n]$ denote the set of $n$--torsion points
(torsion points of order $n$), 
which is the same as $k$--tuples of $n$--th roots of unity.  
There is a bijection between $d$--tuples of characters of $\ZZ_n$ and 
$\TT^d[n]$ sending $(\xi_1,\ldots,\xi_d)$ to $(\xi_1(1), \ldots, \xi_d(1) )$.

If the ordered subsets $(\xi_1,\ldots,\xi_d)$ and
$(\eta_1,\ldots,\eta_d)$ give equivalent harmonic frames,
then (\ref{zformula}) gives a point $z \in V \cap \TT^{2d}[n]$.
Note that $z$ depends on the choice of unitary equivalence between
the two frames made in the proof of Lemma \ref{frametopoint}.




The basic result that allows us to reduce from counting frames to 
counting solutions to equations in roots of unity is the following.

\begin{lemma}
\label{project}
If the cyclic harmonic frame given by $(\xi_1(1),\ldots\xi_d(1))\in\TT^d[n]$
has distinct vectors and
is exceptional, then there is an exceptional point
$$z = (\xi_1(1), \ldots, \xi_d(1),z_{d+1},\ldots,z_{2d})\in V \cap\TT^{2d}[n]. $$
\end{lemma}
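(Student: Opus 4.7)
The plan is straightforward: apply Lemma \ref{frametopoint} to manufacture the point $z$, and then rule out the non-exceptional alternative by a divisibility argument driven by the ``distinct vectors'' hypothesis.

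Since the frame $\Psi_{\{\xi_1,\ldots,\xi_d\}}$ is assumed exceptional, unpacking the definition gives a second subset $\{\eta_1,\ldots,\eta_d\}\subset\hat\ZZ_n$ whose frame is unitarily equivalent to $\Psi_{\{\xi_j\}}$ but is not multiplicatively equivalent to $\{\xi_j\}$. Lemma \ref{frametopoint} then supplies some $a\in\ZZ_n$ with $\sum_j\xi_j(1)=\sum_j\eta_j(a)$, and setting $z_j:=\xi_j(1)$ and $z_{d+j}:=\eta_j(a)$ for $1\le j\le d$ immediately places $z$ in $V\cap\TT^{2d}[n]$.

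The only substantive remaining step is to verify that $z$ is exceptional. I would argue by contradiction: assume $(\eta_1(a),\ldots,\eta_d(a))$ is a permutation of $(\xi_1(1),\ldots,\xi_d(1))$. Writing $\xi_j(1)=e^{2\pi ib_j/n}$ and $\eta_j(1)=e^{2\pi ic_j/n}$, this translates into a permutation $\pi$ of $\{1,\ldots,d\}$ satisfying $ac_j\equiv b_{\pi(j)}\pmod n$ for every $j$. The goal is then to upgrade this congruence to a multiplicative equivalence between $\{\xi_j\}$ and $\{\eta_j\}$, contradicting the standing hypothesis.

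I expect the main obstacle to be that $a$ need not a priori be a unit of $\ZZ_n$, so it cannot simply be inverted. This is precisely where the distinct-vectors hypothesis is needed: via Pontryagin duality it forces $\{\xi_1,\ldots,\xi_d\}$ to generate $\hat\ZZ_n$, equivalently $\gcd(b_1,\ldots,b_d,n)=1$. The congruences $b_{\pi(j)}\equiv ac_j\pmod n$ show that every $b_{\pi(j)}$ lies in the cyclic subgroup $a\ZZ_n=\gcd(a,n)\ZZ_n$ of $\ZZ_n$, so $\gcd(a,n)$ divides the common gcd $\gcd(b_1,\ldots,b_d,n)=1$, forcing $a\in\ZZ_n^*$. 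Multiplying through by $a^{-1}$ then yields $c_j\equiv a^{-1}b_{\pi(j)}\pmod n$, so $\{c_j\}=a^{-1}\{b_j\}$ in $\ZZ_n$; this is exactly multiplicative equivalence of $\{\eta_j\}$ with $\{\xi_j\}$, delivering the desired contradiction.
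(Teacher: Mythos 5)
Your proposal is correct and follows essentially the same route as the paper: unpack the definition of exceptional to get an inequivalent $\{\eta_j\}$, use Lemma \ref{frametopoint} to build $z\in V\cap\TT^{2d}[n]$, and argue by contradiction that a permutation relation $\xi_{\pi(j)}(1)=\eta_j(1)^a$ together with the generation property coming from distinct vectors forces $a\in\ZZ_n^*$, hence multiplicative equivalence. Your additive gcd formulation ($\gcd(a,n)$ divides $\gcd(b_1,\ldots,b_d,n)=1$) is just the paper's observation that $\{\eta_j(1)\}^a$ can generate the $n$--th roots of unity only if $a$ is a unit, written in different notation.
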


\begin{proof} By hypothesis, there is an ordered set $(\eta_1, \ldots, \eta_d)$ of characters of $\ZZ_n$ such that $\{\xi_1,\ldots,\xi_d\}$ and $\{\eta_1,\ldots,\eta_d\}$ are multiplicatively inequivalent, but the frames they give are unitarily equivalent (and hence both have distinct vectors).  Then by Lemma \ref{frametopoint},
there is an $a\in\ZZ_n$ satisfying
(\ref{sumid}),
and so 
$$ z := (\xi_1(1),\ldots,\xi_d(1),\eta_1(a),\ldots,\eta_d(a)) 
\in V \cap \TT^{2d}[n]. $$
Suppose
(by way of contradiction) 
that $z$ is not exceptional, i.e., $z_{d+1},\ldots,z_{2d}$
is a permutation of $z_1,\ldots,z_d$. Then 
after a reordering
$\xi_j(1)=\eta_j(a)=\eta_j(1)^a$, $\forall j$,
so that 
$\{\xi_j\}=\{\eta_j\}^a$. 
The frame given by $\{\xi_j\}$ has distinct vectors,
and so $\{\xi_j\}$ generates $\hat\ZZ_n$.
Thus $\{\xi_j(1)\}=\{\eta_j(1)\}^a$
generates the $n$--th roots of unity, and $a$ must be a unit.
This implies that
$\{\xi_j\}$ and $\{\eta_j\}$
are multiplicatively equivalent,
a contradiction. Thus $z$ is an exceptional point.
\end{proof}

The set of solutions to (\ref{plane}) in torsion points is described by the 
Mordell--Lang conjecture for tori, proved by Laurent \cite[Theorem 2]{L84} (see also the Conjecture on page 299). This states the following:

\begin{theorem}
\label{Laurent}
Let $f$ be a (holomorphic) polynomial on $\CC^k$ with zero set
$$Z(f) := \{z\in\CC^k:f(z)=0\}. $$
Then there are a finite number of (topologically) closed, connected subgroups
$T_1,\ldots,T_m$ of $\TT^k$, and points $p_1,\ldots,p_m$ in $\TT^k_\textup{tors}$, such that
\begin{equation}
\label{Laurentdecomposition}
\TT_\textup{tors}^k \cap Z(f) = \TT_\textup{tors}^k \cap \bigcup_{j=1}^m (p_j \cdot T_j), \qquad
p_j\cdot T_j\subset Z(f), \ \forall j. 
\end{equation}
\end{theorem}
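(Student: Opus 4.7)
The plan is to deduce Theorem \ref{Laurent} from the classical arithmetic fact that vanishing sums of roots of unity have very constrained structure. Write $f$ as a sum of monomials $f(z)=\sum_{m\in S} c_m z^m$ with finite $S\subset\ZZ^k$ and $c_m\ne 0$ (since monomials are invertible on $\TT^k$, allowing negative exponents does not affect $Z(f)\cap\TT^k$). Every closed connected subgroup of $\TT^k$ has the form $T_L=\{z\in\TT^k : z^\ell=1,\ \forall\ell\in L\}$ for some saturated sublattice $L\subset\ZZ^k$, so producing the $T_j$ of (\ref{Laurentdecomposition}) amounts to producing the lattices $L_j$ together with representatives $p_j$.

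For a torsion point $\zeta\in Z(f)\cap\TT_\textup{tors}^k$, partition $S$ into its minimal vanishing subsums relative to $\zeta$: a maximal refinement $S=S_1\sqcup\cdots\sqcup S_r$ for which $\sum_{m\in S_i} c_m\zeta^m=0$ on each part and no proper subsum of any $S_i$ vanishes. Only finitely many such partitions exist (at most the Bell number of $|S|$), so it suffices to prove that, for each fixed partition, the $\zeta$ realising it lie in a finite union of torsion cosets of closed connected subgroups contained in $Z(f)$.

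The arithmetic input, which I expect to be the main obstacle, is a theorem of Mann (sharpened by Conway--Jones): in a minimal vanishing sum $\sum c_m\zeta_m=0$ of length $n$ with fixed coefficients $c_m$, the ratios $\zeta_m/\zeta_{m'}$ are roots of unity whose orders are bounded by a constant $N=N(n,\{c_m\})$ depending only on $n$ and the coefficients. Applying this within each part $S_i$ (with $\zeta_m:=\zeta^m$) and fixing a basepoint $m_{0,i}\in S_i$, one obtains equations $\zeta^{m-m_{0,i}}=\omega_{m,i}$ in which each $\omega_{m,i}$ lies in the finite set of roots of unity of order at most $N$. There are therefore only finitely many admissible collections of phases $(\omega_{m,i})$.

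For each (partition, phase) pair, let $L$ be the saturation in $\ZZ^k$ of the sublattice generated by the exponent differences $\{m-m_{0,i}\}_{m,i}$, set $T:=T_L$, and let $p$ be any torsion point solving the corresponding phase equations (consistency is automatic from the construction, after which $p$ is unique modulo $T$). The set of $\zeta$ meeting all the imposed constraints is then precisely the torsion part of the coset $p\cdot T$. Crucially, on all of $p\cdot T$ the monomial ratios $z^{m-m_{0,i}}$ are constant equal to $\omega_{m,i}$, so each subsum $\sum_{m\in S_i} c_m z^m$ is identically zero on $p\cdot T$, whence $f$ vanishes on $p\cdot T$ and $p\cdot T\subset Z(f)$. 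Running over the finitely many (partition, phase) pairs and relabelling yields the decomposition (\ref{Laurentdecomposition}).
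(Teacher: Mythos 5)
Your route is genuinely different from the paper's. The paper does not prove this statement from scratch: it quotes Laurent's theorem \cite{L84} as a black box (the torsion points of $Z(f)$ lie in a finite union of torsion translates of Zariski-closed subgroups $H_j\subset(\CC^*)^k$ contained in $Z(f)$) and merely massages the conclusion, replacing each $H_j$ by $H_j\cap\TT^k$, which is a closed subgroup of the torus and hence a finite union of torsion translates of a connected closed subgroup. You instead give an essentially self-contained proof of the torsion-point case from Mann's theorem (Conway--Jones) on nondegenerate vanishing sums of roots of unity. This is a legitimate and well-known alternative: the full Mordell--Lang statement is far stronger than what is needed here, and your argument is effective and makes transparent why the exceptional tori in Lemma \ref{pointcontrol} are controlled by coincidences among the exponents. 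The paper's approach buys brevity and generality by outsourcing the arithmetic; yours buys an elementary, quantitative proof at the cost of the bookkeeping over partitions and phases.

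Two points in your write-up need repair, neither fatal. First, Mann's theorem as usually stated requires rational (or at least algebraic) coefficients; for a general polynomial $f$ on $\CC^k$ you must first expand the $c_m$ over a $\overline{\Q}$-basis of $\CC$ so as to split the relation $\sum_{m\in S_i}c_m\zeta^m=0$ into vanishing sums with coefficients in a number field, and then apply the number-field version of the bound. (For the linear form (\ref{plane}) actually used in this paper the coefficients are $\pm1$, so this is harmless.) Second, the solution set of your phase equations $\zeta^{m-m_{0,i}}=\omega_{m,i}$ is a coset of the subgroup cut out by the lattice $L'$ generated by the exponent differences, not of the torus $T_L$ attached to its saturation $L$: since $T_L$ is only the identity component of $T_{L'}$, the single translate $p\cdot T_L$ need not contain every $\zeta$ realising the given (partition, phase) pair, so as written the covering direction of (\ref{Laurentdecomposition}) can fail. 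The fix is to list torsion representatives $p_t$ of the finitely many cosets of $T_L$ in $p\cdot T_{L'}$ and include each $p_t\cdot T_L$ in the union; the monomial ratios $z^{m-m_{0,i}}$ are still constant on each such coset, so the required inclusion $p_t\cdot T_L\subset Z(f)$ survives.
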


We note that this differs slightly from 
Laurent's theorem,
which 
states that there is a finite collection of Zariski-closed subgroups 
$H_j \subset (\C^*)^k$ and $p_j \in \TT^k_\text{tors}$, such that
$$
\TT_\textup{tors}^k \cap Z(f) =  \bigcup_{j=1}^m 
\bigl(\TT_\textup{tors}^k \cap (p_j \cdot H_j) \bigr), \qquad
p_j\cdot H_j\subset Z(f), \ \forall j. 
$$
However, we have $\TT_\textup{tors}^k \cap (p_j \cdot H_j) \subset \TT^k$, 
and $H_j \cap \TT^k$ is a (topologically) closed subgroup of $\TT^k$, 
which must be a finite union of translates of a connected closed subgroup 
by torsion points. 
From this, it is easy to deduce Theorem \ref{Laurent}.

We now apply Theorem \ref{Laurent} to 
$V \cap \TT^{2d}_\textup{tors}$.  
Let $\pi$ be the projection of $\TT^{2d}$ onto the first $d$ components, i.e.,
 $$\pi:\TT^{2d}=\TT^d\times\TT^d\to\TT^d:(z,w)\mapsto z. $$
For any $\sigma \in S_d$, 
let $T_\sigma = \{ (z_1, \ldots, z_d, z_{\gs 1}, \ldots, z_{\gs d}) : 
|z_j| = 1 \} \subset \TT^{2d}$, so that

$$
\TT^{2d}_\textup{tors} \cap \bigcup_{\sigma \in S_d} T_\sigma
$$
is the set of solutions to (\ref{plane}) in roots of unity that 
are not exceptional.

\begin{lemma}
\label{pointcontrol}
There are a finite number of (topologically) closed, connected subgroups
$T_1,\ldots,T_m$ of $\TT^{2d}$ satisfying $\dim \pi(T_j) < d$
for all $j$, 
and points $p_1,\ldots,p_m$ in $\TT^{2d}_\textup{tors}$, such that
\begin{equation}
\TT_\textup{tors}^{2d} \cap V 
= \bigcup_{\sigma \in S_d} (\TT_\textup{tors}^{2d} \cap T_\sigma) \cup 
\bigcup_{j=1}^m (\TT_\textup{tors}^{2d} \cap p_j \cdot T_j).
\end{equation}
\end{lemma}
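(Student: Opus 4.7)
The plan is to apply Theorem \ref{Laurent} to the polynomial $f(z) := \sum_{j=1}^d z_j - \sum_{j=d+1}^{2d} z_j$ (whose zero set is $V$), yielding closed connected subgroups $S_1,\ldots,S_{m'}$ of $\TT^{2d}$ and torsion points $q_1,\ldots,q_{m'}$ with $q_j\cdot S_j\subset V$ and
\[
\TT^{2d}_\textup{tors}\cap V \;=\; \bigcup_{j=1}^{m'}\bigl(\TT^{2d}_\textup{tors}\cap (q_j\cdot S_j)\bigr).
\]
I then want to show that each piece with $\pi(S_j)=\TT^d$ already equals some $T_\sigma$, so that only the pieces with $\dim\pi(S_j)<d$ remain as genuinely new, to be relabeled as $(p_1,T_1),\ldots,(p_m,T_m)$ in the conclusion.

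The central claim is: if $S$ is a closed connected subgroup of $\TT^{2d}$ with $\pi(S)=\TT^d$ and $q\cdot S\subset V$ for some $q\in\TT^{2d}$, then $S=T_\sigma$ and $q\in T_\sigma$ for some $\sigma\in S_d$. To prove it I view the coordinates $z_1,\ldots,z_{2d}$ as characters of $S$ and partition $\{1,\ldots,2d\}$ by the equivalence $k\sim k'$ iff $z_k=z_{k'}$ on $S$, yielding classes $I_1,\ldots,I_r$. Distinct characters of $S$ are linearly independent, so the identity $f(q\cdot s)=0$ on $S$ is equivalent to the system
\[
\sum_{k\in I_\ell,\, k\le d} q_k \;=\; \sum_{k\in I_\ell,\, k>d} q_k, \qquad \ell=1,\ldots,r.
\]
From this I extract three facts: (a) no class can be a singleton, since $|q_k|=1$ forbids $q_k=0$; (b) $\pi(S)=\TT^d$ forces the character lattice $\Lambda_S$ of $S$ to intersect $\ZZ^d\times\{0\}$ trivially, so no two distinct elements of $\{1,\ldots,d\}$ share a class; (c) combining (a) and (b), every class contains at least one element of $\{d+1,\ldots,2d\}$. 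Counting, the $d$ elements of $\{1,\ldots,d\}$ occupy $d$ distinct classes while the $d$ elements of $\{d+1,\ldots,2d\}$ are distributed at least one per class, so $r=d$ and the classes form a perfect pairing $\{\sigma(i),d+i\}$, $i=1,\ldots,d$, for some $\sigma\in S_d$.

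With the pairing in hand, $\Lambda_S$ contains the rank-$d$ sublattice $\Lambda_0:=\langle e_{\sigma(i)}-e_{d+i}:i=1,\ldots,d\rangle$. On the other hand $\pi(S)=\TT^d$ gives $\dim S\ge d$, i.e., $\Lambda_S$ has rank at most $d$. A direct check that $\Lambda_0$ is already saturated in $\ZZ^{2d}$ then forces $\Lambda_S=\Lambda_0$, so $S=T_\sigma$, and the pairing equations above read $q_{\sigma(i)}=q_{d+i}$, i.e., $q\in T_\sigma$. Thus $q\cdot S=T_\sigma$, the torsion points of every large Laurent piece lie in $\bigcup_{\sigma\in S_d}(\TT^{2d}_\textup{tors}\cap T_\sigma)$, and relabeling the remaining small pieces gives the stated decomposition. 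The step requiring most care is the class-counting argument of the previous paragraph, which rules out \emph{Case A} classes lying entirely within $\{d+1,\ldots,2d\}$; a priori such classes would produce large Laurent pieces distinct from the $T_\sigma$, but the tight equality $r=d$ forced jointly by (b) and (c) shows they cannot occur.
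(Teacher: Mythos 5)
Your proof is correct and follows essentially the same route as the paper: apply Laurent's theorem, then show that any translated subtorus contained in $V$ that projects onto all of $\TT^d$ must equal some $T_\sigma$, via linear independence of characters forcing each coefficient class to have size at least two, surjectivity of $\pi$ forcing the first $d$ coordinates into distinct classes, and a counting argument yielding a perfect pairing. The paper phrases this with exponent vectors of a parametrizing map $\TT^k\to\TT^{2d}$ rather than your (dual) annihilator-lattice language, but the combinatorial core is identical.
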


\begin{proof}
We apply Theorem \ref{Laurent} with $Z(f) = V$.
We now determine the possible tori $T_j$ in (\ref{Laurentdecomposition}) 
that can satisfy $p_j \cdot T_j \subset V$.  
For $\alpha \in \ZZ^k$ and $z \in \TT^k$, we define
$z^\alpha:=z_1^{\alpha_1} \ldots z_k^{\alpha_k}$.  

A connected subgroup $T\subset\TT^{2d}$ of dimension $k$ is the image of a map
$$ \TT^k\to\TT^{2d} : z=(z_1,\ldots,z_k)\mapsto(z^{\ga_1},\ldots,z^{\ga_{2d}}), \qquad
\ga_j\in\ZZ^k, $$
where the $\ga_j$ must span $\RR^k$, since the map has a discrete kernel.
If $p=(\go_1,\ldots,\go_{2d}) \in \TT^{2d}$, 
then 
$p\cdot T$ is the image of the map
$z \mapsto(\go_1 z^{\ga_1},\ldots,\go_{2d} z^{\ga_{2d}}).$
Thus for the linear polynomial (\ref{plane}) to vanish on $p\cdot T$, 
we must have
$$ \sum_{j=1}^d \go_j z^{\ga_j} -
\sum_{j=d+1}^{2d} \go_j z^{\ga_j} = 0, \qquad
\forall z\in\TT^k. $$
For any exponent $\beta$, the sum of the coefficients of $z^\gb$ above must be 0.  It follows that any $\gb$ occuring as an $\ga_j$ 
must occur at least twice, 
and so there can be at most $d$ distinct exponents.
Thus the $\alpha_j$ can span $\RR^k$ only if $k\le d$.  
If $k < d$, then clearly $\dim \pi( T) < d$.

If $k=d$, then every exponent occurs exactly twice.  
If some exponent occurs twice in the first sum, i.e.,
$\alpha_j = \alpha_k$ for some $1 \le j \neq k \le d$,
then the function $z_j / z_k$ is constant on $T$,
which implies that $\dim \pi( T) < d$.
If no exponent occurs twice in the first sum, 
then each exponent occurs once in each sum,
i.e., $\ga_{d+j}=\ga_{\gs j}$, $1\le j\le d$, for some $\sigma \in S_d$.
This implies that $T \subset T_\sigma$, and as both are 
$d$--dimensional and connected, we have that $T = T_\sigma$.  
We must also have $\omega_{d+j} = \omega_{\sigma j}$, $1 \le j\le d$, 
so that $p \in T$ and $p \cdot T = T_\gs$.

We have shown that

\begin{equation*}
\TT_\textup{tors}^{2d} \cap V 
= \bigcup_{\sigma \in X} (\TT_\textup{tors}^{2d} \cap T_\sigma) \cup 
\bigcup_{j=1}^m (\TT_\textup{tors}^{2d} \cap p_j \cdot T_j),
\end{equation*}
where $X \subset S_d$ and $\dim \pi(T_j) < d$ for all $j$.  The inclusion

\begin{equation*}
\bigcup_{\sigma \in S_d} (\TT_\textup{tors}^{2d} \cap T_\sigma)  \subset \TT_\textup{tors}^{2d} \cap V
\end{equation*}
means that we can take $X = S_d$, which completes the proof.

\end{proof}

\begin{proposition}
\label{excount}
The number of choices of $d$ characters of $\ZZ_n$ which lead to 
exceptional cyclic harmonic frames with distinct vectors is $\ll n^{d-1}$ as $n\to\infty$.
\end{proposition}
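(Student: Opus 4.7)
The plan is to combine Lemmas \ref{project} and \ref{pointcontrol} with a standard count of $n$--torsion points on closed connected subgroups of $\TT^d$.  By Lemma \ref{project}, every exceptional cyclic harmonic frame with distinct vectors, viewed as an ordered $d$--tuple $(\xi_1(1),\ldots,\xi_d(1)) \in \TT^d[n]$, is the image under $\pi$ of some exceptional point $z \in V \cap \TT^{2d}[n]$.  It therefore suffices to bound $|\pi(E_n)|$, where $E_n$ denotes the set of exceptional $n$--torsion solutions of (\ref{plane}); the bound for unordered $d$--subsets follows at once, since their number is at most that of ordered tuples.

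By Lemma \ref{pointcontrol}, we have $E_n \subset \bigcup_{j=1}^m (p_j \cdot T_j) \cap \TT^{2d}[n]$, where $m$ is a constant independent of $n$ and each $T_j$ is a closed connected subgroup of $\TT^{2d}$ with $\dim \pi(T_j) < d$.  Since the $T_j$ are compact and connected and $\pi$ is a continuous homomorphism, each $\pi(T_j)$ is a closed connected subgroup of $\TT^d$ of dimension at most $d-1$, and applying $\pi$ gives
$$ \pi(E_n) \subset \bigcup_{j=1}^m \bigl(\pi(p_j) \cdot \pi(T_j)\bigr) \cap \TT^d[n]. $$

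To finish, I would invoke the standard fact that any closed connected subgroup $H \subset \TT^d$ of dimension $k$ is isomorphic as a Lie group to $\TT^k$, so that $|H \cap \TT^d[n]| = n^k$.  A coset $xH$ meets $\TT^d[n]$ either in the empty set or in a single coset of $H \cap \TT^d[n]$, hence in at most $n^k$ points.  Taking $k = \dim \pi(T_j) \le d-1$ and summing over the $m$ indices yields $|\pi(E_n)| \le m\, n^{d-1} \ll n^{d-1}$, as required.

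At this stage almost all of the difficulty has been packaged into Lemma \ref{pointcontrol} (which in turn rests on Laurent's theorem); the remaining step is purely bookkeeping plus the torus $n$--torsion count, which is immediate from the structure theorem for compact connected abelian Lie groups.  I do not expect any further obstacle.
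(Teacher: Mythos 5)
Your proposal is correct and follows essentially the same route as the paper: combine Lemma \ref{project} with Lemma \ref{pointcontrol} to trap the exceptional $d$--tuples inside $\TT^d[n]\cap\bigcup_j\pi(p_j\cdot T_j)$ with $\dim\pi(T_j)<d$, and then count $n$--torsion points on (cosets of) lower-dimensional subtori. The only difference is that you spell out the final torsion count $|xH\cap\TT^d[n]|\le n^{\dim H}$, which the paper leaves as an assertion; your justification of it is sound.
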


\begin{proof}
Let $\mathcal{E} \subset \TT^d[n]$ be the set of $d$-tuples of
characters of $\ZZ_n$ (viewed as $n$--th roots) that give  exceptional 
cyclic harmonic frames with distinct vectors,  
and let $\widetilde{\mathcal{E}} \subset V \cap \TT^{2d}[n]$ 
be the set of exceptional points. By Lemma \ref{project} 
and Lemma \ref{pointcontrol}, we have
$$ \mathcal{E} \subset \pi( \widetilde{\mathcal{E}} )
\subset \TT^d[n] \cap \bigcup_{j=1}^m \pi(p_j \cdot T_j), $$
with $\dim \pi(T_j) < d$ for all $j$, which implies that $| \TT^d[n] \cap \pi(p_j \cdot T_j) | \ll n^{d-1}$.  Since the collection of translates $p_j \cdot T_j$ only depends on $d$, 
we obtain $| \mathcal{E} | \ll n^{d-1}$ as required.
\end{proof}


\section{Counting the nonexceptional cyclic harmonic frames}
\label{multequivcntsect}

Let $m_{n,d}$ be the number of 
cyclic harmonic frames of $n$ distinct vectors for $\Cd$, 
up to unitary equivalence via an automorphism,
i.e.,
the number of $d$--element subsets which generate $\ZZ_n$,
up to multiplicative equivalence.  (In this section, it will be convenient to work with group elements rather than characters.)
We will prove Theorem \ref{framecount}
by calculating $m_{n,d}$,
then using Proposition \ref{excount} to conclude 
$$ h_{n,d} \approx m_{n,d}, \qquad n\to\infty. $$
Since all elements which generate $\ZZ_n$ are multiplicatively equivalent,
we have $h_{n,1}=m_{n,1}=1$.  We may therefore assume that $d \ge 2$ for the rest of this section. 

Let $\cY_\textup{gen}$ be the set of all $d$--element subsets
which generate $\ZZ_n$ (i.e. give cyclic
harmonic frames with distinct vectors), and $\cY_\textup{ex} \subset \cY_\textup{gen}$
be the subset which gives exceptional frames. Then $m_{n,d}$ is the 
number of $\ZZ_n^*$--orbits of 
$\cY_\textup{gen}$ under
the multiplicative action of $\ZZ_n^*$.

If $S$ is a collection of $d$--element subsets of $\ZZ_n$, and $S$ is 
stable under the action of $\ZZ_n^*$, we denote the set of its
orbits by $S/\ZZ_n^*$, and the set of its elements
fixed by $a\in\ZZ_n^*$ by $\Fix(a)=\Fix(a,S)$.

\begin{lemma}
\label{orbits}
We have the bounds
$$ |(\cY_\textup{gen}\setminus \cY_\textup{ex})/\ZZ_n^*|
\le h_{n,d} \le 
m_{n,d} = | \cY_\textup{gen} / \ZZ_n^* |. $$
\end{lemma}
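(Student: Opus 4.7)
The plan is to verify the three parts of the lemma in sequence, leaning on the equivalence (established in \S\ref{harmframesbasics}) between multiplicative equivalence of generating subsets $J\subset\ZZ_n$ and unitary equivalence of the frames $\Phi_J$ via an automorphism, together with the definition of exceptional frame given at the end of \S\ref{harmframesbasics}. The identity $m_{n,d}=|\cY_\textup{gen}/\ZZ_n^*|$ is essentially a restatement of the definition: the assignment $J\mapsto\Phi_J$ is a bijection between $\cY_\textup{gen}$ and cyclic harmonic frames with $n$ distinct vectors, and the multiplicative $\ZZ_n^*$--action on $\cY_\textup{gen}$ coincides with unitary equivalence via an automorphism, so quotienting by this action counts $m_{n,d}$ exactly.

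For the upper bound $h_{n,d}\le m_{n,d}$, I would use that multiplicative equivalence of subsets implies unitary equivalence of frames (the automorphism-induced equivalence is a special case of unitary equivalence). Consequently every unitary equivalence class of cyclic harmonic frames with $n$ distinct vectors is a union of $\ZZ_n^*$--orbits in $\cY_\textup{gen}$, and the map $\cY_\textup{gen}/\ZZ_n^*\to\{\text{unitary equivalence classes}\}$ is surjective, giving the inequality.

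For the lower bound, I would first check that $\cY_\textup{ex}$ is stable under the $\ZZ_n^*$--action, so that $(\cY_\textup{gen}\setminus\cY_\textup{ex})/\ZZ_n^*$ is well defined. If $J\in\cY_\textup{ex}$, then $\Phi_J$ is unitarily equivalent to some $\Phi_{K'}$ with $J,K'$ multiplicatively inequivalent; for any $a\in\ZZ_n^*$, the frame $\Phi_{aJ}$ is unitarily equivalent (via the automorphism) to $\Phi_J$, hence to $\Phi_{K'}$. If $aJ$ and $K'$ were multiplicatively equivalent, then so would be $J$ and $K'$, a contradiction. Hence $aJ\in\cY_\textup{ex}$. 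Next, I would show that distinct orbits in $\cY_\textup{gen}\setminus\cY_\textup{ex}$ give unitarily inequivalent frames: if $J,K\in\cY_\textup{gen}\setminus\cY_\textup{ex}$ and $\Phi_J$ is unitarily equivalent to $\Phi_K$, then since $J$ admits no exceptional equivalence (being nonexceptional), the pair $J,K$ must be multiplicatively equivalent, i.e.\ they lie in a common $\ZZ_n^*$--orbit. This makes the induced map from $(\cY_\textup{gen}\setminus\cY_\textup{ex})/\ZZ_n^*$ to unitary equivalence classes injective, which completes the proof.

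The main obstacle is purely definition-chasing rather than technical: the subtlest point is the $\ZZ_n^*$--stability of $\cY_\textup{ex}$, which relies on the fact that multiplicative equivalence is an equivalence relation finer than unitary equivalence, combined with the definition of an exceptional frame as one \emph{having} an exceptional equivalence (a property shared by every frame in its $\ZZ_n^*$--orbit).
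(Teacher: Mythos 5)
Your proposal is correct and follows essentially the same route as the paper: the upper bound comes from each unitary equivalence class in $\cY_\textup{gen}$ being a union of $\ZZ_n^*$--orbits (since multiplicative equivalence implies unitary equivalence), and the lower bound from the fact that, by the definition of $\cY_\textup{ex}$, unitary equivalence restricted to $\cY_\textup{gen}\setminus\cY_\textup{ex}$ coincides with multiplicative equivalence. You spell out the $\ZZ_n^*$--stability of $\cY_\textup{ex}$ and the injectivity step more explicitly than the paper does, but the argument is the same.
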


\begin{proof} We observe $h_{n,d}$ is the number of equivalence classes
in $\cY_\textup{gen}$ under the equivalence relation given by unitary
equivalence of the corresponding frames. Since each equivalence
class is stable under the  action of $\ZZ_n^*$, the number of such
classes is at most the number of $\ZZ_n^*$--orbits, which gives the
upper bound.
By definition of $\cY_\textup{ex}$, 
the unitary equivalence classes in $\cY_\textup{gen} \setminus\cY_\textup{ex}$ 
are exactly the $\ZZ_n^*$--orbits, which gives the lower bound.
\end{proof}

We now estimate the sizes of 
$\cY_\textup{gen}$ and $\cY_\textup{gen} \setminus \cY_\textup{ex}$.

\begin{lemma}
\label{gencount}
We have
\begin{equation}
\label{numgendist}
|\cY_\textup{gen}|,| \cY_\textup{gen} \setminus \cY_\textup{ex} | 
= \frac{n^d}{d!} \prod_{p | n} (1 - p^{-d}) + O(n^{d-1}), \quad
n\to\infty,
\end{equation}
where the product is over the prime factors $p$ of $n$.
\end{lemma}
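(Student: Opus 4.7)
The plan is to count ordered $d$-tuples in $\ZZ_n^d$ that generate $\ZZ_n$ by a Chinese Remainder Theorem / inclusion--exclusion argument, then pass to unordered subsets by dividing by $d!$, absorbing tuples with repeated entries into the $O(n^{d-1})$ error. Proposition \ref{excount} then upgrades the estimate from $|\cY_\textup{gen}|$ to $|\cY_\textup{gen} \setminus \cY_\textup{ex}|$.

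First, I would observe that a subset $\{a_1,\ldots,a_d\} \subset \ZZ_n$ generates $\ZZ_n$ if and only if $\gcd(a_1, \ldots, a_d, n) = 1$. Writing $n = \prod p^{e_p}$ and using $\ZZ_n \cong \prod_{p|n} \ZZ_{p^{e_p}}$, this is equivalent to: for each prime $p | n$, at least one $a_j$ is coprime to $p$. For the factor $\ZZ_{p^{e_p}}$, the ordered $d$-tuples with every entry divisible by $p$ number $p^{(e_p - 1)d}$, so the count of tuples with at least one entry coprime to $p$ is $p^{e_p d} - p^{(e_p - 1)d} = p^{e_p d}(1 - p^{-d})$. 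Multiplying across the prime factors of $n$ gives
$$n^d \prod_{p | n}(1 - p^{-d})$$
as the number of ordered $d$-tuples in $\ZZ_n^d$ which generate $\ZZ_n$.

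Next, I would bound the number of ordered $d$-tuples in $\ZZ_n^d$ with at least one repeated entry by $\binom{d}{2} n^{d-1} = O(n^{d-1})$, so the number of ordered generating $d$-tuples with pairwise distinct entries is $n^d \prod_{p | n}(1 - p^{-d}) + O(n^{d-1})$. Each element of $\cY_\textup{gen}$ is an unordered $d$-subset corresponding to exactly $d!$ such ordered tuples, so dividing by $d!$ yields the asserted formula for $|\cY_\textup{gen}|$. Finally, Proposition \ref{excount} gives $|\cY_\textup{ex}| \ll n^{d-1}$, so $|\cY_\textup{gen} \setminus \cY_\textup{ex}|$ has the same main term with an error still of size $O(n^{d-1})$.

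There is no substantial obstacle here: the argument is elementary counting once the CRT reformulation of the generation condition is in hand. The only bookkeeping point is that the implicit constants depend only on the fixed parameter $d$, which is immediate since both $\binom{d}{2}$ and the structure of the CRT count are independent of $n$.
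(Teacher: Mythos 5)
Your proof is correct and follows essentially the same route as the paper: count ordered generating $d$-tuples, discard the $O(n^{d-1})$ tuples with repeated entries, divide by $d!$, and invoke Proposition \ref{excount} for the exceptional subsets. The only cosmetic difference is that you compute the number of generating tuples by factoring through the Chinese Remainder Theorem, whereas the paper runs inclusion--exclusion over the maximal subgroups $p\ZZ_n$; both yield the same product $n^d\prod_{p\mid n}(1-p^{-d})$.
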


\begin{proof}
It is convenient to work with the ordered subsets of $\ZZ_n$.
Let $\cX=\ZZ_n^d$ be the set of $d$--tuples of elements of $\ZZ_n$,
$\cX_\textup{gen}$ be the subset of those
whose elements generate $\ZZ_n$, and 
$\cX_\textup{dist}$ be the subset of those whose elements are all distinct.
Clearly,
$$ |\cY_\textup{gen}| = |\cX_\textup{dist}\cap\cX_\textup{gen}|/d!. $$
The size of $\cX_\textup{gen}$ is {\it Hall's $d$--th Eulerian function}
($d=1$ gives Euler's totient function $\varphi(n)$).
We now calculate $|\cX_\textup{gen}|$ by using inclusion--exclusion counting.
Let $\cX(m) \subset \cX$ be the collection of $d$--tuples 
all of whose elements lie in $m\ZZ_n$. If some $d$--tuple 
does not generate $\ZZ_n$, then its elements must be contained
in some maximal proper subgroup $p\ZZ_n$, $p | n$, and so we have
$$ \cX_\textup{gen} = \cX \setminus \bigcup_{p | n} \cX(p).  $$
It is easy to see, 
that if $p_1, \ldots, p_k$ are distinct primes dividing $n$, then
$$ \Bigl|\bigcap_{j=1}^k\cX(p_j)\Bigr| = |\cX(p_1p_2 \cdots p_k)| 
= \bigl(n / (p_1p_2 \cdots p_k) \bigr)^d. $$
Thus, inclusion--exclusion counting gives
\begin{align*}
| \cX_\textup{gen} | &= | \cX | - \sum_{p | n} |\cX(p)| 
+ \sum_{ \substack{ p_1, p_2 | n \\ p_1 \neq p_2 } } | \cX(p_1) \cap \cX(p_2)|
-  \cdots \cr
& = n^d \prod_{p | n} (1 - p^{-d}). 
\end{align*}
Since $|\cX \setminus \cX_\textup{dist}|=n^d-n(n-1)\cdots(n-d+1) \ll n^{d-1}$, 
we have
$$ d! |\cY_\textup{gen} |= | \cX_\textup{gen} \cap \cX_\textup{dist} | 
= n^d \prod_{p | n} (1 - p^{-d}) + O(n^{d-1})
$$
which gives the estimate for $|\cY_\textup{gen} |$. 
Because $|\cY_\textup{ex}| \ll n^{d-1}$ by Proposition \ref{excount}, 
we also have the estimate for $|\cY_\textup{gen}\setminus\cY_\textup{ex} |$. 
\end{proof}

We now count the number of orbits for the action of $\ZZ_n^*$
on $\cY_\textup{gen}$ and $\cY_\textup{gen} \setminus \cY_\textup{ex}$. 
Recall Burnside's Theorem, which states that if $G$ is a finite group
acting on a finite set $S$ then the number of orbits is
\begin{equation}
\label{Burnsidect}
|S/G| = {1\over|G|} \sum_{a\in G} |\Fix(a,S)|. 
\end{equation}

%


We shall combine Burnside's Theorem with the following bound 
for $|\Fix(a, \cY_\textup{gen})|$.

\begin{lemma}
\label{fixbound}
Let $a \in \ZZ_n^*$. Then 
\begin{enumerate}
\item
$|\Fix(a, \cY_\textup{gen})| \le n^{d-1}$ for $a \ne 1$.
\item
$|\Fix(a, \cY_\textup{gen})| \le n^{d-2}$ for $a^2 \ne 1$.
\end{enumerate}
\end{lemma}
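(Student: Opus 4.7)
The plan is to construct, in each case, an explicit injection from $\Fix(a,\cY_\textup{gen})$ into a Cartesian power of $\ZZ_n$, using generation of $\ZZ_n$ by $J$ to force $J$ to contain a long $\langle a\rangle$--orbit.

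For part (1), I would begin with the observation that the fixed set
\[ F_a := \{x \in \ZZ_n : ax = x\} = \ker(a-1) \]
is a \emph{proper} subgroup of $\ZZ_n$ whenever $a \ne 1$. Hence any $J \in \Fix(a,\cY_\textup{gen})$, being a generating set for $\ZZ_n$, cannot be contained in $F_a$; so some $x \in J$ satisfies $ax \ne x$. Fix an ordering on $\ZZ_n$ (identify it with $\{0,1,\ldots,n-1\}$), let $j_1$ be the smallest element of $J$ with $aj_1 \ne j_1$, and list the remaining $d-2$ elements of $J \setminus \{j_1, aj_1\}$ in increasing order as $j_3, \ldots, j_d$. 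The map
\[ J \longmapsto (j_1, j_3, \ldots, j_d) \in \ZZ_n^{d-1} \]
is injective, since $J = \{j_1, aj_1, j_3, \ldots, j_d\}$ can be read off from the image. This gives $|\Fix(a,\cY_\textup{gen})| \le n^{d-1}$.

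For part (2), I would run the same argument with $a^2$ in place of $a$. Because $a^2 \ne 1$, the subgroup $F_{a^2} = \ker(a^2-1)$ is again proper in $\ZZ_n$, and any element $x \in \ZZ_n$ whose $\langle a\rangle$--orbit has size at most $2$ satisfies $a^2x = x$, hence lies in $F_{a^2}$. If every element of $J$ had orbit size $\le 2$, then $J \subset F_{a^2}$, contradicting that $J$ generates $\ZZ_n$. So $J$ must contain some $x$ for which $x, ax, a^2x$ are all distinct; taking $j_1$ to be the smallest such element of $J$ and listing the remaining $d-3$ elements in increasing order as $j_4, \ldots, j_d$, the map
\[ J \longmapsto (j_1, j_4, \ldots, j_d) \in \ZZ_n^{d-2} \]
is injective by the same reconstruction, yielding $|\Fix(a,\cY_\textup{gen})| \le n^{d-2}$. (When $d = 2$, the hypothesis forces $\Fix(a,\cY_\textup{gen}) = \emptyset$, so the bound holds vacuously.)

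I do not anticipate a serious obstacle: the whole argument is just an injection, and the only conceptual point is to recognize that generation of $\ZZ_n$ by $J$ is precisely what forbids $J$ from being swallowed by $\ker(a-1)$ or $\ker(a^2-1)$, which in turn shaves one or two degrees of freedom from the parametrization of $J$.
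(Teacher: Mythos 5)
Your proof is correct and follows essentially the same route as the paper: use that $\ker(a-1)$ (resp.\ $\ker(a^2-1)$) is a proper subgroup to find an element of $J$ whose $\langle a\rangle$--orbit contributes two (resp.\ three) distinct elements to $J$, thereby cutting one (resp.\ two) degrees of freedom from the count. Your explicit injection merely formalizes the paper's "the choice of $b$ determines a second element $ab\in A$" counting, and your remark that $\Fix(a,\cY_\textup{gen})=\emptyset$ when $d=2$ and $a^2\ne1$ matches the paper's.
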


\begin{proof} 
Let $A\in\Fix(a, \cY_\textup{gen})$. 
We note that the elements of $A\in\cY_\textup{gen}$ generate $\ZZ_n$.

If $a\ne 1$, 
then $H=\{b\in\ZZ_n:ab=b\}$ is a proper subgroup of $\ZZ_n$,
and so there is some $b\in A$, $b\not\in H$.
As the choice of $b$ determines a second element $ab \in A$, 
the number of choices for $A$ is less than
$n\cdot n^{d-2}=n^{d-1}$.
Similarly, 
if $a^2\ne 1$ (so $a\ne1$), then 
$H=\{b\in\ZZ_n:a^2b=b\}$ is a proper subgroup of $\ZZ_n$,
so that $\{b,ab,a^2b\}$ are distinct elements of $A$ 
for $b\in A$, $b\notin H$.  It follows that if $d = 2$ then $\Fix(a, \cY_\textup{gen}) = \emptyset$, and if $d \ge 3$ the number of choices for $A$ is less than $n\cdot n^{d-3}=n^{d-2}$.
%
\end{proof}

\begin{proposition}
\label{multiequivcountprop}
For each $\gep>0$, we have
\begin{equation}
\label{mainasympest}
|\cY_\textup{gen}/\ZZ_n^*|,|(\cY_\textup{gen}\setminus\cY_\textup{ex})/\ZZ_n^*|
= {1\over d!\varphi(n)} 
\bigl( n^d \prod_{p | n} (1 - p^{-d}) + O_\epsilon(n^{d-1 + \epsilon})\bigr),
\quad n\to\infty.
\end{equation}
\end{proposition}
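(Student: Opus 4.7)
The plan is to apply Burnside's formula (\ref{Burnsidect}) with $G = \ZZ_n^*$ acting multiplicatively on $S = \cY_\textup{gen}$ (respectively $\cY_\textup{gen}\setminus\cY_\textup{ex}$), and then estimate the contribution of each $a \in \ZZ_n^*$ to the sum of fixed points. Before starting I would note that $\cY_\textup{ex}$ is itself $\ZZ_n^*$--stable (a multiplicative translate of an exceptional subset by a unit is multiplicatively equivalent to it, hence still exceptional), so $\cY_\textup{gen}\setminus\cY_\textup{ex}$ is stable under $\ZZ_n^*$ and Burnside applies to it.

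The main term will come from $a=1$, contributing $|\Fix(1,S)|=|S|$, which by Lemma \ref{gencount} equals $(n^d/d!)\prod_{p|n}(1-p^{-d}) + O(n^{d-1})$ in both cases. For $a\ne 1$ I would split $\ZZ_n^*$ into two pieces according to whether $a^2=1$ or $a^2\ne 1$. For $a^2\ne 1$, Lemma \ref{fixbound}(ii) gives $|\Fix(a,\cY_\textup{gen})|\le n^{d-2}$, and summing over the at most $\varphi(n)\le n$ such units contributes $O(n^{d-1})$. For the involutions $a\ne 1$ with $a^2=1$, Lemma \ref{fixbound}(i) gives only the weaker bound $n^{d-1}$, so I need the count of such $a$ to be small.

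The key input here is that the number of solutions to $a^2=1$ in $\ZZ_n^*$ is $2^{\omega(n)}$ (up to a small constant), where $\omega(n)$ is the number of distinct prime factors of $n$. By the standard divisor bound $\omega(n)\ll \log n/\log\log n$, so $2^{\omega(n)}=O_\epsilon(n^\epsilon)$ for every $\epsilon>0$. Hence the involution contribution is bounded by $O_\epsilon(n^\epsilon)\cdot n^{d-1}=O_\epsilon(n^{d-1+\epsilon})$. This is precisely the step that forces the $\epsilon$ in the statement of the proposition; I expect it to be the only place where an $\epsilon$ is really needed, and it is the mildest obstacle in the proof.

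Assembling these bounds into Burnside's formula gives
\begin{equation*}
|S/\ZZ_n^*| = \frac{1}{\varphi(n)}\sum_{a\in\ZZ_n^*}|\Fix(a,S)| = \frac{1}{\varphi(n)}\Bigl(|S| + O_\epsilon(n^{d-1+\epsilon})\Bigr),
\end{equation*}
and substituting the estimate of Lemma \ref{gencount} for $|S|$ (absorbing the $O(n^{d-1})$ error into the larger $O_\epsilon(n^{d-1+\epsilon})$ term) yields (\ref{mainasympest}) simultaneously for both $S=\cY_\textup{gen}$ and $S=\cY_\textup{gen}\setminus\cY_\textup{ex}$. Finally, combining this with Lemma \ref{orbits} and Proposition \ref{excount} (which ensures $|\cY_\textup{ex}|\ll n^{d-1}$, so the main terms agree) is exactly what is needed to sandwich $h_{n,d}$ between the two quantities in (\ref{mainasympest}) and deduce Theorem \ref{framecount}.
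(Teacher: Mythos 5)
Your proposal is correct and follows essentially the same route as the paper: Burnside's formula, the partition of $\ZZ_n^*$ into $\{1\}$, the involutions, and the rest, the fixed-point bounds of Lemma \ref{fixbound}, and the divisor bound $2^{\omega(n)}\ll_\epsilon n^\epsilon$ to control the involution contribution. The only cosmetic difference is that you run the full Burnside count symmetrically on both sets (after checking $\ZZ_n^*$-stability of $\cY_\textup{ex}$, a point worth making explicit), whereas the paper proves an upper bound for $|\cY_\textup{gen}/\ZZ_n^*|$ and a lower bound for $|(\cY_\textup{gen}\setminus\cY_\textup{ex})/\ZZ_n^*|$ and sandwiches both via the trivial inequality between them.
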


\begin{proof} 
Since $|\ZZ_n^*|=\varphi(n)$, counting the 
$\ZZ_n^*$--orbits of $\cY_\textup{gen}$ by
Burnside's Theorem (\ref{Burnsidect}) gives
$$
| \cY_\textup{gen} / \ZZ_n^* | = \frac{1}{\varphi(n)} \sum_{a \in \ZZ_n^*} 
|\Fix(a, \cY_\textup{gen})|.
$$
Partition $\ZZ_n^*$ into 
$\{ 1 \}$, $B=\{ a : a^2 = 1, a \neq 1 \}$ and $\{ a : a^2 \neq 1\}$, and apply Lemma \ref{fixbound} to the last two sets to obtain
$$ | \cY_\textup{gen} / \ZZ_n^* | \le \frac{1}{\varphi(n)} 
\bigl( | \cY_\textup{gen} | + |B| n^{d-1} + \varphi(n) n^{d-2} \bigr).  $$ 
We recall that $\ZZ_{p^m}^*$ is cyclic for $p$ an odd prime,
and $\ZZ_{2^m}^*$ is a product of at most two cyclic groups.
By the Chinese Remainder Theorem and 
the structure of $\ZZ_{p^m}^*$ for $p$ prime, we have
$$ |B| < | \{ a \in \ZZ_n^* : a^2 = 1 \} | \le 2^{\omega(n) + 1} $$
where $\omega(n)$ is the number of prime factors of $n$.
We see that $2^{\omega(n)}$ is at most the number of divisors $d(n)$ of $n$, and it is known that $d(n) \ll_\epsilon n^\epsilon$, 
see for instance \cite[Thm 13.12]{A76}.  Applying this and $\varphi(n) \le n$ gives
$$ | \cY_\textup{gen} / \ZZ_n^*| \le \frac{1}{\varphi(n)} 
\bigl( | \cY_\textup{gen} | + O_\epsilon( n^{d-1+\epsilon}) \bigr).
$$
Combining this with the estimate (\ref{numgendist}) 
for $| \cY_\textup{gen} |$ gives the upper bound
\begin{equation}
\label{orbupperbnd}
| \cY_\textup{gen} / \ZZ_n^* | 
\le \frac{1}{d! \varphi(n)} 
\bigl( n^d \prod_{p | n} (1-p^{-d})+O_\epsilon(n^{d-1+\epsilon})\bigr). 
\end{equation}

We estimate $|(\cY_\textup{gen} \setminus \cY_\textup{ex})/\ZZ_n^*|$ 
in the same way. Let $S=\cY_\textup{gen}\setminus \cY_\textup{ex}$ 
in 
(\ref{Burnsidect}) and take only the $a=1$ term to obtain
$$
| (\cY_\textup{gen} \setminus \cY_\textup{ex}) / \ZZ_n^* | \ge \frac{1}{\varphi(n)} | \cY_\textup{gen} \setminus \cY_\textup{ex} |.
$$
Combining this with the estimate (\ref{numgendist}) for 
$| \cY_\textup{gen}\setminus\cY_\textup{ex} |$
gives the lower bound 
\begin{equation}
\label{orblowerbnd}
| (\cY_\textup{gen} \setminus \cY_\textup{ex}) / \ZZ_n^* | \ge 
\frac{1}{d! \varphi(n)} 
\bigl( n^d \prod_{p | n} (1-p^{-d})+O_\gep(n^{d-1+\gep})\bigr). 
\end{equation}
Since $|(\cY_\textup{gen}\setminus\cY_\textup{ex})/\ZZ_n^* | 
\le | \cY_\textup{gen}  / \ZZ_n^* | $, 
the bounds (\ref{orbupperbnd}) and (\ref{orblowerbnd})
give (\ref{mainasympest}).
\end{proof}

By Lemma \ref{orbits},
Proposition \ref{multiequivcountprop}
and (\ref{Eulerprodform}), we have
\begin{align*}
h_{n,d}, m_{n,d} 
& =  \frac{1}{d! \varphi(n)} 
\bigl( n^d \prod_{p | n} (1-p^{-d})+O_\gep(n^{d-1+\gep})\bigr) \cr
& =  \frac{n^d}{d! \varphi(n)} 
\prod_{p | n} (1-p^{-d}) \bigl( 1+O_\gep(n^{-1+\gep})\bigr) , 
\quad n\to\infty,
\end{align*}
which completes the proof of Theorem \ref{framecount}
(we already observed that $h_{n,1}=m_{n,1}=1$).

\section{Some examples}

The number of cyclic harmonic frames up to multiplicative 
equivalence can be calculated exactly by Burnside counting
(this can be done by a computer algebra package):
\begin{equation}
\label{mndform}
m_{n,d}=|\cY_\textup{gen}/\ZZ_n^*|
= {1\over\varphi(n)}\Bigl(|\cY_\textup{gen}|
+\sum_{a\in\ZZ_n^*\atop a \neq 1}|\Fix(a,\cY_\textup{gen})|\Bigr).
\end{equation}
This slightly over counts $h_{n,d}$ when there are exceptional frames.
Theorem \ref{framecount} gives the approximation
\begin{equation}
\label{hndapprox}
h_{n,d} \approx a_{n,d}:=\frac{n^d}{d! \varphi(n)} 
\prod_{p | n} (1 - p^{-d}), \qquad n\to\infty.
\end{equation}
This appears to give a good fit to $h_{n,d}$ and $m_{n,d}$
(even for small values of $n$), see Figure \ref{nharmgraph}.
\begin{figure}[h]
\centering
\includegraphics[width=14.0cm]{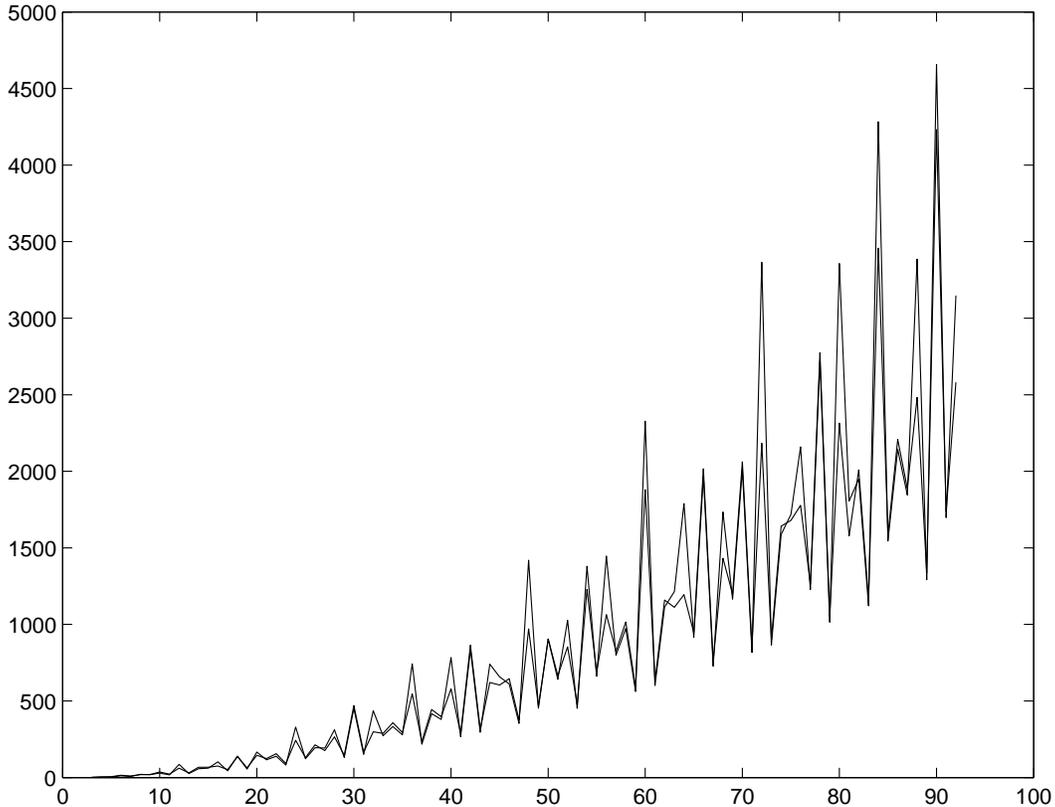}
\caption{The number of harmonic frames of $n$ distinct vectors for $\CC^3$
(including noncyclic frames) as calculated by \cite{HW06} together
with $a_{n,3}$.  }
\label{nharmgraph} 
\end{figure}

Since $\varphi(n)$ is multiplicative, with 
$$ \varphi(p^m)= p^m\Bigl(1-{1\over p}\Bigr), $$
it follows that
$$ a_{n,1}=1, \qquad
a_{n,2} = {n\over 2} \prod_{p | n} \Bigl(1 + {1\over p}\Bigr)\in\ZZ, 
\quad n>2. $$ 
For $d>2$, $a_{n,d}$ may not be an integer.

\begin{example}
For $d=2$,  
unitary equivalence and multiplicative equivalence are the same 
\cite{CW11}, i.e, $h_{n,2}=m_{n,2}$. Despite
$a_{n,2}$ being an integer, it does not always equal $m_{n,2}$. 
They first differ for $n=8$, when the units group has
three elements of order $2$, i.e., $3,5,7\in\ZZ_8^*$ with
$$ \Fix(3)=\bigl\{\{ 1, 3 \},\{ 5, 7 \}\bigr\}, \quad
\Fix(5)=\bigl\{\{ 1, 5 \}, \{ 3, 7 \}\bigr\} \qquad
\Fix(7)= \bigl\{\{ 3, 5 \}, \{ 1, 7 \}\bigr\}. $$
Since $\cY_\textup{gen}$ has $22$ elements, 
(\ref{mndform}) gives 
$a_{8,2}= 6 < 7= {1\over 4}(22+2+2+2)=m_{8,2}.$
\end{example}


\begin{example} For $d>2$ there exist exceptional cyclic frames,
e.g., for $n=8$ vectors in $\CC^3$, there are $17$ multiplicative
equivalence classes of $3$--element subsets that generate $\ZZ_8$.
The classes
$$ \bigl\{\{1,2,5\},\{3,6,7\}\bigr\}, \qquad
\bigl\{\{1,5,6\},\{2,3,7\}\bigr\} $$
give exceptional frames (a frame from each class gives an 
exceptional pair), and it is easy to show that
$ h_{8,3}=16<17=m_{8,3}$.
\end{example}

\section{Harmonic frames with a prime number of vectors}

When $n$ is prime, then the $n-1$ primitive $n$--th roots 
of unity are a $\Q$--basis for the cyclotomic field $\Q[\go]$
that they generate. As a consequence, many of the inner products
between vectors of the (cyclic) harmonic frames are distinct,
and so there are no exceptional frames:

\begin{proposition} If $n = p$ is prime, then all harmonic frames 
of $n$ distinct vectors for $\Cd$
are cyclic, and multiplicative and unitary equivalence are the same for these frames, 
i.e., $h_{p,d}=m_{p,d}$. 
\end{proposition}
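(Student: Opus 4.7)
Since $\ZZ_p$ is the unique abelian group of order $p$ up to isomorphism, every harmonic frame of $p$ vectors arises as a $\ZZ_p$--frame and is therefore cyclic; this disposes of the first claim.

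For the equivalence assertion, I would argue directly using Lemma \ref{frametopoint} and the arithmetic of $p$--th roots of unity, rather than passing through Laurent's theorem. Suppose $d$--element subsets $J,E\subset\hat\ZZ_p \cong \ZZ_p$ determine unitarily equivalent cyclic harmonic frames $(v_g)$ and $(w_g)$ with distinct vectors, via a unitary $U$ and reindexing $\gs$. Carrying through the calculation in the proof of Lemma \ref{frametopoint} without specializing $k=1$, and then setting $\ell=0$, yields
\[
\sum_{r\in J} \go^{rk} = \sum_{r\in E} \go^{r\tau(k)}\qquad\forall k\in\ZZ_p,
\]
where $\go=e^{2\pi i/p}$ and $\tau(k):=\gs(k)-\gs(0)$ is a bijection of $\ZZ_p$ fixing $0$.

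The crux of the argument is number-theoretic. The irreducibility of the cyclotomic polynomial $1+x+\cdots+x^{p-1}$ over $\Q$ means that the only $\Q$--linear relation among $1,\go,\go^2,\ldots,\go^{p-1}$ is $\sum_{r=0}^{p-1}\go^r=0$. Consequently, two $\{0,1\}$--linear combinations of these roots coincide if and only if their supports agree, unless both supports equal all of $\ZZ_p$. Provided $d<p$ (the case $d=p$ is trivial, as it forces $J=E=\ZZ_p$), the displayed equation forces $kJ=\tau(k)E$ as subsets of $\ZZ_p$ for every $k\ne 0$. Specializing $k=1$ gives $J=aE$ with $a:=\tau(1)\in\ZZ_p^*$ (nonzero since $\tau$ is a bijection fixing $0$), exhibiting $J$ and $E$ as multiplicatively equivalent.

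The only subtlety to verify is that the constant coefficient-shift permitted by the relation $\sum_{r=0}^{p-1}\go^r=0$ must in fact vanish; this is precisely where the hypothesis $d<p$ enters. The argument depends essentially on primality: for composite $n$, nontrivial multiplicative relations among $n$--th roots of unity produce genuine exceptional equivalences, as illustrated by the $n=8$, $d=3$ example in Section 6.
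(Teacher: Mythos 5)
Your proof is correct and follows essentially the same route as the paper's: both reduce via Lemma \ref{frametopoint} to an identity among $p$-th roots of unity and then exploit the fact that the only rational relation among $1,\go,\dots,\go^{p-1}$ is the vanishing of their sum (the paper bookkeeps multiplicities via $a_j-a_0=b_j-b_0$, you compare supports directly, and you obtain $a=\tau(1)\neq 0$ from $\tau$ being a bijection fixing $0$ where the paper invokes the generation argument of Lemma \ref{project}). One small correction: the constant shift $\lambda$ in $\mathbf{1}_{kJ}-\mathbf{1}_{\tau(k)E}=\lambda(1,\dots,1)$ vanishes because both supports have the same cardinality $d$ (summing coordinates gives $\lambda p=0$), so the hypothesis $d<p$ is not where that step actually gets its traction.
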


\begin{proof}  Let $\{\xi_1,\ldots,\xi_d\}$ and $\{\eta_1,\ldots,\eta_d\}$
be characters of $\ZZ_p$ giving unitarily equivalent harmonic
frames with distinct vectors. By Lemma \ref{frametopoint} there is some $a \in \ZZ_p$ with
\begin{equation}
\label{inprovals}
\sum_{j=1}^d \xi_j(1) = \sum_{j=1}^d \eta_j(a).
\end{equation}
We will show that after a reordering
$\xi_j(1)=\eta_j(a)$, $\forall j$.
The argument of Lemma \ref{project} then implies that the frames 
are multiplicatively equivalent.

Let $\go$ be a primitive
$p$--th root of unity,
and $a_j$ and $b_j$ be the number of 
times that $\omega^j$ occurs as a summand on the left hand and right hand
sides of (\ref{inprovals}). Then
$$ a_0+\cdots a_{p-1}= b_0+\cdots+b_{p-1}=d. $$
Since $1+\go+\cdots+\go^{p-1}=0$, we may replace each appearance
of $1$ in (\ref{inprovals}) by $-\go-\cdots-\go^{p-1}$, and
equate coefficients of the resulting $\Q$--linear combinations of 
primitive roots to obtain
$$ a_j-a_0=b_j-b_0, \qquad j=1,2,\dots,p-1. $$
Solving the above system of $p$ linear equations in $a_0,\ldots,a_{p-1}$
gives $a_j=b_j$, $\forall j$.
\end{proof}

The above argument shows that harmonic frames 
that are unitarily inequivalent have
no inner product between distinct vectors in common.

For $n=p$ prime we are able to give an 
explicit formula for the count (\ref{mndform}) of
$m_{p,d}=h_{p,d}$.
It is convenient to split this into the {\it lifted} and 
{\it unlifted} harmonic frames. We say that a harmonic frame 
given by $J\subset\ZZ_n$ is {\bf lifted} if $0\in J$,
equivalently, the subset of characters defining it contains
the trivial character $1\in\hat\ZZ_n$, 
or its vectors have a nonzero sum.

\begin{theorem} 
\label{primeframe}
Let $p$ be a prime, and $h_{p,d}^{\rm u}$
and $h_{p,d}^{\rm l}$ be the number of unlifted and lifted harmonic
frames of $p$ distinct vectors for $\Cd$ up to unitary equivalence.
For 
$d>1$,
we have
\begin{align}
h_{p,d}^{\rm u} 
& = {1\over p-1}\sum_{j|\gcd(p-1,d)}{{p-1\over j}\choose{d\over j}}\varphi(j), 
\label{primeframeI}
\\
h_{p,d}^{\rm l} & = {1\over p-1}\sum_{j|\gcd(p-1,d-1)}{{p-1\over j}\choose{d-1\over j}}\varphi(j).
\label{primeframeII}
\end{align}
\end{theorem}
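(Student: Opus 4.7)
By the preceding proposition, $h_{p,d}=m_{p,d}$ when $p$ is prime, so it suffices to count, up to the multiplicative action of $\ZZ_p^*$, the $d$--element subsets $J\subset\ZZ_p$ that generate $\ZZ_p$. Since $p$ is prime, every nonzero element of $\ZZ_p$ generates $\ZZ_p$, so the generation hypothesis merely excludes $J=\{0\}$, which cannot occur for $d>1$. The property ``$0\in J$'' is $\ZZ_p^*$--invariant (because $a\cdot 0 = 0$), so the orbits partition cleanly into \emph{unlifted} ones ($J\subset\ZZ_p^*$) and \emph{lifted} ones ($0\in J$). It therefore suffices to count each kind separately by Burnside enumeration.

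For the unlifted formula (\ref{primeframeI}), the plan is to apply (\ref{Burnsidect}) to the action of $\ZZ_p^*$ on $d$--element subsets of $\ZZ_p^*$. The group $\ZZ_p^*$ is cyclic of order $p-1$, so for each divisor $j$ of $p-1$ there are exactly $\varphi(j)$ elements $a\in\ZZ_p^*$ of order $j$. For such an $a$, the subgroup $\langle a\rangle$ acts on $\ZZ_p^*$ by left multiplication, and since this is the restriction of the regular action of $\ZZ_p^*$ on itself, $\langle a\rangle$ acts \emph{freely}: every orbit has size $j$, giving $(p-1)/j$ orbits. A subset $J$ satisfies $aJ=J$ iff it is a union of $\langle a\rangle$--orbits, which forces $j\mid d$ and then yields $\binom{(p-1)/j}{d/j}$ choices. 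Grouping the terms in (\ref{Burnsidect}) by the order $j$ of $a$, and noting that the constraints $j\mid p-1$ and $j\mid d$ are equivalent to $j\mid\gcd(p-1,d)$, produces (\ref{primeframeI}).

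For the lifted formula (\ref{primeframeII}), each lifted subset takes the form $J=\{0\}\sqcup J'$ with $J'\subset\ZZ_p^*$ of size $d-1$, and since $0$ is always fixed we have $aJ=J \Iff aJ'=J'$. Thus the $\ZZ_p^*$--orbits on lifted $d$--subsets of $\ZZ_p$ are in bijection with the $\ZZ_p^*$--orbits on $(d-1)$--subsets of $\ZZ_p^*$, and the same Burnside count with $d$ replaced by $d-1$ delivers (\ref{primeframeII}).

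I do not anticipate any serious obstacle: the reduction to the $\ZZ_p^*$--action is supplied by the preceding proposition, the lifted/unlifted dichotomy is preserved because $0$ is fixed by every unit, and the free action of $\langle a\rangle$ on $\ZZ_p^*$ turns the fixed--set count into a clean binomial coefficient. The only mild bookkeeping is in collating elements by their order $j$, which is exactly where the sum over $j\mid\gcd(p-1,d)$ (or $\gcd(p-1,d-1)$) emerges.
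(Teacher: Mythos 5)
Your proposal is correct and follows essentially the same route as the paper: reduce to multiplicative equivalence via the preceding proposition, apply Burnside's lemma to the free action of $\langle a\rangle$ on $\ZZ_p\setminus\{0\}$ grouped by the order $j$ of $a$ (yielding the $\binom{(p-1)/j}{d/j}\varphi(j)$ terms), and obtain the lifted count as the unlifted count in dimension $d-1$ since $0$ is fixed by every unit. No gaps.
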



\begin{proof} For $d=1$, the unique harmonic frame (with distinct
vectors) is unlifted, so that $h_{p,1}^{\rm u}=1$, $h_{p,1}^{\rm l}=0$.
We observe that the formula (\ref{primeframeI}) also holds for $d=1$.
For $d>1$, all $d$--element subsets of
$\ZZ_p$ generate $\ZZ_p$, 
and so all harmonic frames have distinct vectors.

We first count the unitarily inequivalent unlifted 
harmonic frames, i.e., the number of $d$--element subsets
of $\ZZ_p\setminus \{ 0 \}$ up to multiplicative equivalence.
If $a \in \ZZ_p^*$ has order $j$, then its action on $\ZZ_p \setminus \{ 0 \}$ 
gives $\tfrac{p-1}{j}$ orbits of size $j$.  
In order for there to be a $d$--element subset $J$ of $\ZZ_p \setminus\{0\}$ 
fixed by $a$, we must have $j | d$, 
and the number of such subsets is $|\Fix(a)|={{p-1\over j}\choose{d\over j}}$.
There are $\varphi(j)$ elements in $\ZZ_p^*$ of order $j$, and so
Burnside's theorem (\ref{Burnsidect}) applied to $S$ the collection
of $d$--element subsets of $\ZZ_p\setminus\{0\}$ gives
the first formula:
$$ h_{p,d}^{\rm u}
= {1\over|\ZZ_p^*|} \sum_{j|\gcd(p-1,d)} \sum_{a\in\ZZ_p^* \atop\ord(a)=j} |\Fix(a,S)|
= {1\over p-1} \sum_{j|\gcd(p-1,d)} {{p-1\over j}\choose{d\over j}}\varphi(j).  $$

We now count the lifted frames. 
These are given by the $d$--element subsets $J\subset\ZZ_p$ 
with $0\in J$, which are multiplicatively equivalent if and only if 
the $(d-1)$--element subsets $J \setminus\{0\}$ are. 
Thus $h_{p,d}^{\rm l}=h_{p,d-1}^{\rm u}$,
which gives (\ref{primeframeII}) since the formula for $h_{p,d}^{\rm u}$ holds for $d \ge 1$.
%
\end{proof}

A backwards recursive formula for $h_{p,d}$ based on orbit counting
is given in \cite{H10}.

\begin{example} For $d=2$ and $p > 2$, we have $h_{p,2}={1\over2}(p+1)$, since
$$ h_{p,1}^{\rm u}=h_{p,2}^{\rm l}=1, \qquad
h_{p,2}^{\rm u}=h_{p,3}^{\rm l}
={1\over p-1}\Bigl\{ {p-1\choose2}+{{p-1\over2}\choose 1}\Bigr\}
={1\over2}(p-1). $$
\end{example}

\begin{example} For $d=3$ and $p > 2$, we have
$$ h_{p,3}^{\rm u} = h_{p,4}^{\rm l} 
={1\over p-1}
\begin{cases}
{p-1\choose 3}, & p\not\equiv 1 \pmod 3; \cr
{p-1\choose 3}+2{{p-1\over 3}\choose 1}, & p\equiv 1 \pmod 3.
\end{cases} $$
Hence
$$ h_{p,3} = 
\begin{cases}
{1\over 6}(p^2-2p+3), & p\not\equiv 1 \pmod 3; \cr
{1\over 6}(p^2-2p+7), & p\equiv 1 \pmod 3. \cr
\end{cases} $$
The above formulas for $ p\equiv 1 \pmod 3$ and $ p\equiv 2 \pmod 3$
appear in \cite{H10} (Prop.\ 4.2).
\end{example}

\begin{example}
For $d=4$ and $p > 2$, we have
$$ h_{p,4}^{\rm u} = h_{p,5}^{\rm l} 
={1\over p-1}
\begin{cases}
{p-1\choose 4}+{{p-1\over2}\choose2}, & p\not\equiv 1 \pmod 4; \cr
{p-1\choose 4}+{{p-1\over2}\choose2}+2{{p-1\over4}\choose1},
& p\equiv 1 \pmod 4.
\end{cases} $$
As indicated, we can construct formulas for $h_{p,d}$ depending
on $p$ modulo $d$ and $d-1$, e.g.,
$$ h_{p,4}=h_{p,4}^{\rm u}+h_{p,4}^{\rm l}
= {1\over 24}(p^3-5p^2+9p+19),
\qquad p\equiv 1\pmod{12}. $$
\end{example}

It is also possible to count the number of {\it real} harmonic frames.
We recall that  $J\subset\ZZ_n$ gives a real harmonic frame
if and only if it is closed under taking inverses, i.e., $J=-J$. 
For $n=p$ an odd prime, $-j=j$ if and only if $j=0$, and so the $J$
giving real frames have $0 \notin J$ when $d$ is even,
and $0 \in J$ when $d$ is odd. 
Burnside counting gives the following.

\begin{proposition} Let $p$ be an odd prime and $d>1$. For $d$ even, 
the number of real harmonic (unlifted) frames of $p$ distinct vectors for $\Rd$
(up to unitary equivalence) is 
$$ h_{p,d}^\RR = {1\over p-1} \Bigl\{ \sum_{j|\gcd(p-1,d)\atop j\ {\rm even}}
{{p-1\over j}\choose{d\over j}}\varphi(j) 
+\sum_{j|\gcd(p-1,{d\over 2})\atop j\ {\rm odd}}
{{p-1\over 2 j}\choose{d\over2 j}}\varphi(j) \Bigr\} . $$
For $d$ odd, 
the number of real harmonic (lifted) frames of $p$ distinct vectors for $\Rd$ is 
$$ h_{p,d}^\RR = {1\over p-1} \Bigl\{ \sum_{j|\gcd(p-1,d-1)\atop j\ {\rm even}}
{{p-1\over j}\choose{d-1\over j}}\varphi(j) 
+\sum_{j|\gcd(p-1,{d-1\over 2})\atop j\ {\rm odd}}
{{p-1\over 2 j}\choose{d-1\over2 j}}\varphi(j) \Bigr\}. $$
\end{proposition}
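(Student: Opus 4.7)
The plan is to adapt the Burnside-style argument of Theorem \ref{primeframe} to the collection of subsets closed under negation. As observed before the statement, a real harmonic frame of $p$ distinct vectors for $\Rd$ is given by a $d$-element subset $J\subset\ZZ_p$ with $J=-J$, and since $-j=j$ only for $j=0$ in $\ZZ_p$ (as $p$ is odd), parity forces $0\notin J$ when $d$ is even and $0\in J$ when $d$ is odd. In the even case, I let $S$ be the set of $d$-element subsets of $\ZZ_p^*=\ZZ_p\setminus\{0\}$ satisfying $J=-J$. In the odd case, the map $J\mapsto J\setminus\{0\}$ is a $\ZZ_p^*$-equivariant bijection from the admissible subsets onto the $(d-1)$-element subsets of $\ZZ_p^*$ closed under negation, reducing that case to the even computation with $d$ replaced by $d-1$.

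The key observation is that a subset $J\subset\ZZ_p^*$ satisfies both $aJ=J$ and $-J=J$ if and only if it is invariant under the subgroup $H_a:=\langle a,-1\rangle\le\ZZ_p^*$. Since $\ZZ_p^*$ is cyclic of order $p-1$ with unique involution $-1$, if $a$ has order $j$ then $-1\in\langle a\rangle$ exactly when $j$ is even. Hence for $j$ even, $H_a=\langle a\rangle$ has order $j$ and partitions $\ZZ_p^*$ into $(p-1)/j$ orbits of size $j$, so the number of $d$-element $H_a$-invariant subsets is $\binom{(p-1)/j}{d/j}$, which vanishes unless $j\mid d$. For $j$ odd, the action of $-1$ on $\ZZ_p^*$ is fixed-point free, giving $H_a=\langle a\rangle\times\langle-1\rangle$ of order $2j$ acting freely with $(p-1)/(2j)$ orbits of size $2j$, so the number of invariant $d$-element subsets is $\binom{(p-1)/(2j)}{d/(2j)}$, which vanishes unless $2j\mid d$, i.e.\ $j\mid d/2$.

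To finish, I apply Burnside's theorem (\ref{Burnsidect}) to $\ZZ_p^*$ acting on $S$, group the elements by order $j=\ord(a)$ (using that the cyclic group $\ZZ_p^*$ contains $\varphi(j)$ elements of order $j$ whenever $j\mid p-1$), and split the resulting sum according to the parity of $j$. This yields the stated formula for $d$ even directly, and the $d$ odd formula follows by applying the same count with $d-1$ in place of $d$ via the bijection above. The main subtlety is the orbit-size dichotomy driven by whether $-1$ already lies in $\langle a\rangle$; this is what produces the split of the summation into its even-$j$ and odd-$j$ pieces and explains the appearance of $(p-1)/(2j)$ and $d/(2j)$ (rather than $(p-1)/j$ and $d/j$) in the odd-$j$ summand. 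A minor bookkeeping point worth checking is the consistency between the $j=1$ contribution (counting all of $S$) and the $j=2$ contribution coming from $a=-1$: both evaluate to $\binom{(p-1)/2}{d/2}$, reflecting the fact that being closed under negation is the same as being $\langle -1\rangle$-invariant.
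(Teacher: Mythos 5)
Your argument is correct and follows essentially the same route as the paper: Burnside counting over $\ZZ_p^*$ acting on the negation-closed $d$-subsets of $\ZZ_p\setminus\{0\}$, with the fixed sets computed via the dichotomy on whether $-1\in\langle a\rangle$ (equivalently whether $\ord(a)$ is even), and the odd-$d$ case reduced to $d-1$ by deleting $0$. The only cosmetic difference is that the paper phrases the odd-$j$ case via the cyclic generator $-a$ of order $2j$ rather than your $\langle a\rangle\times\langle -1\rangle$ description; the counts agree.
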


\begin{proof}
We first consider the case when $d$ is even.  The unit group $\ZZ_p^*$ is cyclic of even order $p-1$, and we let $a\in\ZZ_p^*$ have order $j$.  We wish to count the number of $d$-element sets $J \subset \ZZ_p \setminus \{ 0 \}$ that are invariant under multiplication by $a$ and $-1$.  If $j$ is even, then $-1=a^{j\over 2}$, and so this is equal to the number of subsets invariant under multiplication by $a$.  This is ${p-1\over j}\choose{d\over j}$ as in the proof of Theorem \ref{primeframe}.  If $j$ is odd, then the subgroup of $\ZZ_p^*$ generated by $-1$ and $a$ is cyclic with generator $-a$.  We therefore wish to find the number of subsets invariant under multiplication by $-a$, and as this element has order $2j$, this is ${p-1\over 2 j}\choose{d\over2 j}$.



Thus Burnside
orbit counting gives
$$ h_{p,d}^\RR = {1\over p-1} \Bigl\{ \sum_{j|\gcd(p-1,d)\atop j\ {\rm even}}
{{p-1\over j}\choose{d\over j}}\varphi(j) 
+\sum_{j|\gcd(p-1,{d\over 2})\atop j\ {\rm odd}}
{{p-1\over 2 j}\choose{d\over2 j}}\varphi(j) \Bigr\} . $$

When $d$ is odd, the subsets $J$ giving real frames are multiplicatively
equivalent if and only if the sets $J \setminus \{ 0 \}$ are, and so we
may apply the previous count (with $d$ replaced by $d-1$).
\end{proof}

\begin{example}
For $d = 2,3$, there is a single real harmonic frame of $p$ distinct vectors, i.e.,
$$ h_{p,2}^\RR =h_{p,3}^\RR =1. $$
For $d$ even, $d\ge4 $, we have the estimate
$$  h_{p,d}^\RR = h_{p,d+1}^\RR \approx p^{{d\over2}-1}, \qquad
p\to\infty. $$
\end{example}





\section{Projective unitary equivalence of harmonic frames}

Many applications of tight frames $(v_j)$ are based on the expansion
(\ref{redundonexp}), i.e., depend only on the vectors up to
unit scalar multiples. We say that tight frames $(v_j)$ and $(w_k)$
are {\bf projectively unitarily equivalent} (up to a reindexing) if
there is a unitary map $U$, unit modulus scalars $c_j$,
and a bijection $\gs:j\to k$ (a reindexing)
between their index sets for which 
$$ v_j = c_j Uw_{\gs j}, \qquad \forall j. $$
In \cite{CW16} it is shown that harmonic frames given by subsets
$J,K\subset G$ are projectively unitarily equivalent (with $\gs$ the
identity) if $J$ and $K$ are translates, i.e., $K=J-b$, 
$b\in G$.
Therefore the {\it affine transformations} $L_{(\gs,b)}$ given by
$$ L_{(\gs,b)} g:= \gs g +b, \qquad \gs\in\Aut(G), \ b\in G $$
map subsets $J\subset G$ to subsets which give projectively 
unitarily equivalent harmonic frames (via an automorphism). 
Calculations of \cite{CW16} suggest that the majority of 
projective unitary equivalences occur in this way, 
via a reindexing which is an automorphism (indeed there is no 
known case where it does not). We observe that every $d$--element 
subset of $\ZZ_n$ is a translate of one which generates $\ZZ_n$,
and so every cyclic harmonic frame is projectively similar
to one with distinct vectors.

We now count the number $p_{n,d}$
of cyclic harmonic frames $\Phi_J$  for  $\Cd$
up to this projective unitary equivalence
via an affine transformation of the index set $J$.
Since the affine group (group of affine transformations) has order $n\varphi(n)$ and there
are ${n\choose d}$ subsets of $\ZZ_n$ of size $d$, we have
$$ p_{n,d} \ge {\binom{n}{d}\over n\varphi(n)} 
\gg {n^{d-1}\over\varphi(n)}\ge n^{d-2}, \qquad n\to\infty. $$
For $d\ge4$, we can establish this rate of growth:

\begin{theorem}
Let $p_{n,d}$ be the number of orbits of the affine group acting
on the $d$--element subsets of $\ZZ_n$. 
For $d\ge4$, we have 
\begin{equation}
\label{projequivest}
p_{n,d} \approx {n^{d-1}\over \varphi(n)} \ge n^{d-2}, \qquad n\to\infty.
\end{equation}
\end{theorem}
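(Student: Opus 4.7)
The plan is to apply Burnside's theorem to the action of the affine group $\mathrm{Aff}(\ZZ_n)=\ZZ_n\rtimes\ZZ_n^*$, of order $n\varphi(n)$, on the $d$-element subsets of $\ZZ_n$:
$$
p_{n,d} = \frac{1}{n\varphi(n)}\sum_{L}|\Fix(L)|.
$$
Here $L=L_{(\sigma,b)}$ runs over all affine maps $g\mapsto \sigma g+b$ with $\sigma\in\ZZ_n^*$, $b\in\ZZ_n$. The identity contributes exactly $\binom{n}{d}$, and $\binom{n}{d}/(n\varphi(n))\approx n^{d-1}/\varphi(n)$ already matches the lower bound displayed just before the theorem. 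So the problem reduces to establishing the matching upper bound
$$
\sum_{L\ne\mathrm{id}}|\Fix(L)| = O(n^d),
$$
after which the conclusion follows by dividing by $n\varphi(n)$.

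I would split the non-identity elements into three types. First, the pure translations $L_{(1,b)}$ with $b\ne 0$: a fixed $d$-subset must be a union of cosets of $\langle b\rangle$, so counting by $m=\mathrm{ord}(b)$ shows the total contribution is $O(n^{d/2})$, dominated by $m=2$. Second, $\sigma\ne 1$ with $L_{(\sigma,b)}$ having no fixed point on $\ZZ_n$ (equivalently $b\notin(1-\sigma)\ZZ_n$): every $\langle L\rangle$-orbit on $\ZZ_n$ then has size at least $2$, so a fixed $d$-subset is a union of at most $d/2$ of the at most $n/2$ such orbits, giving $|\Fix(L)|=O(n^{d/2})$; summed over the at most $n\varphi(n)\le n^2$ such $L$, the contribution is $O(n^{d/2+2})$, which is $O(n^d)$ exactly when $d\ge 4$.

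The third type, $\sigma\ne 1$ with $L_{(\sigma,b)}$ having a fixed point $x_0$, is the heart of the argument. Conjugating by translation by $x_0$ identifies $L_{(\sigma,b)}$ with $L_{(\sigma,0)}:x\mapsto\sigma x$, whose fixed-point set is a subgroup $H\le\ZZ_n$ of order $g:=\gcd(\sigma-1,n)$, with every other orbit of size at least $2$. Splitting an invariant $d$-subset into its intersection with $H$ and its complement gives
$$
|\Fix(L_{(\sigma,0)})| \le \sum_{k=0}^d \binom{g}{k}\binom{(n-g)/2}{(d-k)/2} = O\bigl(\max(g^d,\,n^{d/2})\bigr).
$$
There are $n/g$ values of $b$ for which $L_{(\sigma,b)}$ has a fixed point, so the contribution at each $\sigma$ is $O(ng^{d-1}+n^{d/2+1}/g)$. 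Using $|\{\sigma\in\ZZ_n^*:\gcd(\sigma-1,n)=g\}|\le n/g$ (since the congruence $\sigma\equiv 1\pmod g$ has $n/g$ solutions in $\ZZ_n$) and substituting $g=n/h$ gives
$$
\sum_{\sigma\ne 1} n\,g(\sigma)^{d-1} \le n^2\sum_{g\mid n} g^{d-2} = n^d\sum_{h\mid n} h^{-(d-2)} \le n^d\,\zeta(d-2),
$$
and an analogous estimate handles the $n^{d/2+1}/g$ piece.

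The hypothesis $d\ge 4$ enters exactly here, as it is precisely what makes $\zeta(d-2)$ finite; it is needed to absorb the large-$g$ elements $\sigma$ (those with $\sigma\equiv 1$ modulo a large divisor of $n$, for which multiplication by $\sigma$ has many fixed points), and this is the main obstacle of the proof. Granting the estimate, the three cases combine to $\sum_{L\ne\mathrm{id}}|\Fix(L)|=O(n^d)$, so Burnside gives $p_{n,d}=O(n^{d-1}/\varphi(n))$, which together with the already-noted lower bound yields the stated $\approx$.
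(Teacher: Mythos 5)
Your proposal is correct and follows essentially the same route as the paper's proof: Burnside counting over $\ZZ_n^*\ltimes\ZZ_n$, the dichotomy between affine maps with and without a fixed point, conjugation of the fixed-point case to the linear map $x\mapsto\sigma x$ with fixed subgroup $H$ of order $\gcd(\sigma-1,n)$, and a divisor sum (convergent precisely for $d\ge4$) to absorb the elements with many fixed points. The only differences are cosmetic: you bound the no-fixed-point contribution by $O(n^{d/2+2})$ where the paper uses $n^{d-2}\cdot n^2$, and you organize the fixed-point case by a single binomial sum rather than the paper's partition $F_0,F_1,F_2,F_{\ge3}$.
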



\begin{proof}
Since $\ZZ_n^*$ gives the automorphisms of $\ZZ_n$,
the group of affine transformations of $\ZZ_n$ 
is isomorphic to 
$\ZZ_n^* \ltimes \ZZ_n$, with 
$(a,b) \in \ZZ_n^* \ltimes \ZZ_n$ acting on $\ZZ_n$ via
$x \mapsto ax + b$.

To obtain an upper bound for $p_{n,d}$,
we estimate the terms in the Burnside orbit counting formula
\begin{equation}
\label{fixsumform}
p_{n,d} = {1\over n\varphi(n)} 
\sum_{ (a,b) \in \ZZ_n^* \ltimes \ZZ_n } |\Fix(a,b)|,
\end{equation}
where $\Fix(a,b)$ 
is the collection of $d$--element
subsets $A\subset\ZZ_n$ fixed by the action of $(a,b)$.
The orbit of $x$ under the action of $(a,b) \in \ZZ_n^* \ltimes \ZZ_n$ is
$$
x, \quad ax+b, \quad
a^2 x+ ab+b, \quad\ldots. $$ 
Since $ax+b=x+((a-1)x +b )$, all orbits 
will have at least two elements provided that $b\not\in (a-1)\ZZ_n$. 
Thus, our assumption $d \ge 4$ implies that any $A$ fixed by $(a,b)$ with $b\not\in (a-1)\ZZ_n$ will have
at least two of its elements determined by the fact it 
is a union of orbits. This implies the contribution to the sum in (\ref{fixsumform})
by these elements is at most $n^{d-2} | \ZZ_n^* \ltimes \ZZ_n | \le n^d$.
It therefore remains to show the contribution to the sum in (\ref{fixsumform})
from the elements $(a,b)$ with $b\in(a-1)\ZZ_n$ is $\ll n^d$.

Suppose that $b\in(a-1)\ZZ_n$.
Conjugating $(a,b)$ by $(1,c)$ 
does not change the size of
$\Fix(a,b)$. 
Since $(1,c) (a,b) (1,c)^{-1} = (a, b + c(1-a) )$ and $b \in (a-1) \ZZ_n$, 
we may choose a $c$ so that $(1,c) (a,b) (1,c)^{-1} = (a,0)$.  
Thus $|\Fix(a,b)|=|\Fix(a,0)|$.
We observe the action of $(a,0)$ and $a$ on $\ZZ_n$ is the same.
Let $m := \text{gcd}(a-1,n)$. Then 
$$ (a-1)\ZZ_n = m\ZZ_n,$$
and the 
subgroup of $\ZZ_n$ on which $a$ acts trivially is
$$ H=\{x\in\ZZ_n:ax=x\} =\hbox{${n\over m} \ZZ_n$}. $$
We partition $\Fix(a,0)=\cup_j F_j$, where each $A\in F_j$ has
exactly $j$ elements not in $H$, i.e.,
$$ F_j:=\{A\in\Fix(a,0):|A\setminus H|=j\}, \qquad
j=0,1,\ldots,d. $$
If $x\notin H$, 
then $ax\ne x$ and $ax\not\in H$ 
(otherwise $ax = a^{-1}a(ax)=a^{-1}ax=x$), so that 
$$ |F_0|\le|H|^d=m^d, \qquad
|F_1|=0, \qquad 
|F_2|\le |H|^{d-2} n=m^{d-2}n, \qquad
\sum_{j\ge 3}|F_j|\le n^{d-2}. $$ 
The last inequality holds because any $A$ with at least 3 elements not in $H$ has at least 2 elements determined by the fact it is a union of orbits.  Using these, we have the estimate
\begin{align*}
\sum_{ (a,b) \in \ZZ_n^* \ltimes \ZZ_n
\atop b\in(a-1)\ZZ_n } |\Fix(a,b)|
& \le \sum_{m|n}\sum_{a\in\ZZ_n^*\atop m = \gcd(a-1,n)}
\sum_{b\in m\ZZ_n} |\Fix(a,0)|
\le \sum_{m|n}  {n\over m} {n\over m}
\Bigl(|F_0|+|F_2|+\sum_{j\ge3} |F_j|\Bigr)\cr
& \le n^d \sum_{m|n} \left({n\over m}\right)^{2-d} 
+ n^{d-1} \sum_{m|n} \left({n\over m}\right)^{4-d} 
+  n^d \sum_{m|n} {1\over m^2} \cr
&  \le n^d \sum_k {1\over k^2}
+ n^{d-1} \sum_{m|n} 1
+  n^d \sum_{m} {1\over m^2}
\ll n^d,
\end{align*}
which completes the proof.
\end{proof}

\begin{lemma}
For $n$ prime, (\ref{projequivest}) also holds for $d=3$.
\end{lemma}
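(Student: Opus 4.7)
The plan is to run the Burnside argument from the preceding theorem in the special case $n = p$ prime and $d = 3$, where the obstruction that forced $d \ge 4$ in the general proof evaporates. The lower bound $p_{p,3} \gg p^{d-1}/\varphi(p)$ is already noted in the paragraph preceding the theorem, so it suffices to establish a matching upper bound via
\[ p_{p,3} = \frac{1}{p\varphi(p)} \sum_{(a,b) \in \ZZ_p^* \ltimes \ZZ_p} |\Fix(a,b)|. \]

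The key observation is that for $n = p$ prime the problematic case $b \notin (a-1)\ZZ_n$ collapses almost entirely: if $a \neq 1$ then $a-1$ is a unit modulo $p$, so $(a-1)\ZZ_p = \ZZ_p$, and the only pairs $(a,b)$ with $b \notin (a-1)\ZZ_p$ are $(1,b)$ with $b \neq 0$. For $p > 3$ the translation $x \mapsto x + b$ has orbits of size $p$ and hence fixes no $3$-element subset, so this entire case contributes $0$ to the sum.

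For the remaining pairs, $(1,0)$ contributes $\binom{p}{3} = O(p^3)$. For $a \neq 1$ the conjugation step from the preceding proof gives $|\Fix(a,b)| = |\Fix(a,0)|$, and $(a,0)$ acts as multiplication by $a$, whose nonzero orbits in $\ZZ_p$ all have common size $\ord(a)$. A $3$-element subset that is a union of $a$-orbits must be either a single orbit of size $3$, forcing $\ord(a) = 3$ (possible only when $3 \mid p-1$, and yielding $\varphi(3) = 2$ such $a$ with $(p-1)/3$ fixed sets each), or the union of $\{0\}$ with an orbit of size $2$, forcing $a = -1$ with $(p-1)/2$ fixed sets. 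Summing $|\Fix(a,0)|$ over the at most three relevant values of $a$ and over all $p$ values of $b$ yields a contribution of $O(p^2)$.

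Combining the two parts gives $\sum_{(a,b)} |\Fix(a,b)| = O(p^3)$, so $p_{p,3} \ll p^{d-1}/\varphi(p)$, matching the lower bound. I expect no serious obstacle here: the prime hypothesis makes the orbit structure of $\ZZ_p^* \ltimes \ZZ_p$ on $\ZZ_p$ very rigid, and the only case that genuinely required $d \ge 4$ in the main theorem simply does not arise.
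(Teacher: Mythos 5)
Your proof is correct, but it takes a genuinely different route from the paper. The paper's argument is a two-line normalization: since $n$ is prime, any $3$-element subset $A$ can be mapped by an affine transformation onto a set containing $0$ and $1$ (send two of its elements to $0$ and $1$; the required multiplier is a unit because $n$ is prime), so every affine orbit has a representative of the form $\{0,1,w\}$ and hence $p_{n,3}\le n$, which matches the lower bound already recorded before the theorem. You instead rerun the Burnside count of the preceding theorem, observing that primality forces $(a-1)\ZZ_p=\ZZ_p$ for $a\ne1$, that nonzero translations fix no $3$-set for $p>3$, and that for $a\ne1$ the set $\Fix(a,0)$ is nonempty only when $\ord(a)\in\{2,3\}$, each such $a$ contributing $O(p)$ fixed sets; summing gives $\binom{p}{3}+O(p^2)$ and hence $p_{p,3}\ll p \approx p^{2}/\varphi(p)$. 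All the steps check out (the conjugation identity $(1,c)(a,b)(1,c)^{-1}=(a,b+c(1-a))$ and the orbit-size analysis are correct, and the finitely many small primes are irrelevant to the asymptotic). Your approach is longer but yields, in principle, an exact formula for $p_{p,3}$ in the spirit of Theorem \ref{primeframe}, whereas the paper's normalization trick is shorter and makes the bound $p_{n,3}\le n$ transparent without any group-theoretic bookkeeping.
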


\begin{proof}  If $n$ is prime, then for any 3-element subset $A \subset \ZZ_n$ there is an affine transformation $(a,b)$ so that $(a,b)A$ contains 0 and 1.  There are at most $n$ choices for the third element, so that $n \ge p_{n,3}$.

\end{proof}

\noindent
{\bf\large Acknowledgements}
\medskip

The first author was supported by NSF grants DMS-1509331 and DMS-1501230, and would like to thank the University of Auckland for its hospitality while part of this work was carried out.




\bibliographystyle{alpha}
\nocite{*}
\bibliography{references}

\end{document}